\numberwithin{equation}{section}
\newcommand{\Z}{\mathbb{Z}}
\newcommand{\Q}{\mathbb{Q}}
\newcommand{\OO}{\mathcal{O}}
\newcommand\FF{\mathbb{F}}
\newcommand\MM{\mathbb{M}}
\newcommand\DD{\mathcal{D}}
\newcommand\aaa{\mathfrak{a}}
\newcommand\AAA{\mathfrak{A}}
\newcommand\bb{\mathfrak{b}}
\newcommand\BBB{\mathfrak{B}}
\newcommand\ccc{\mathfrak{c}}
\newcommand\mm{\mathfrak{m}}
\newcommand\ff{\mathfrak{f}}
\newcommand\nn{\mathfrak{n}}
\newcommand\pp{\mathfrak{p}}
\newcommand\qq{\mathfrak{q}}
\newcommand\bbb{\mathfrak{b}}
\newcommand\rk{\mathrm{rk}}
\newcommand\Vol{\mathrm{Vol}}
\newcommand\Gal{\mathrm{Gal}}
\newcommand\Norm{N}
\newcommand\spin{\mathrm{spin}}
\newcommand\ONE{\textbf{1}}
\newcommand\Cl{\mathcal{C}\ell}
\newtheorem{theorem}{Theorem}
\newtheorem{lemma}{Lemma}[section]
\newtheorem{corollary}[lemma]{Corollary}
\newtheorem*{conjecture}{Conjecture $C_n$}
\title{\vspace{-\baselineskip}\sffamily\bfseries Joint distribution of spins}
\author[1]{Peter Koymans\thanks{Niels Bohrweg 1, 2333 CA Leiden, Netherlands, p.h.koymans@math.leidenuniv.nl}}
\author[2]{Djordjo Milovic\thanks{Gower Street, WC1E 6BT London, United Kingdom, djordjo.milovic@ucl.ac.uk}}
\affil[1]{Mathematisch Instituut, Leiden University}
\affil[2]{Department of Mathematics, University College London}
\date{\today}
\begin{document}
\maketitle

\begin{abstract}
We answer a question of Iwaniec, Friedlander, Mazur and Rubin \cite{FIMR} on the joint distribution of spin symbols. As an application we give a negative answer to a conjecture of Cohn and Lagarias on the existence of governing fields for the $16$-rank of class groups under the assumption of a short character sum conjecture.
\end{abstract}

\section{Introduction}
One of the most fundamental and most prevalent objects in number theory are extensions of number fields; they arise naturally as fields of definitions of solutions to polynomial equations. Many interesting phenomena are encoded in the splitting of prime ideals in extensions. For instance, if $p$ and $q$ are distinct prime numbers congruent to $1$ modulo $4$, the statement that $p$ splits in $\Q(\sqrt{q})/\Q$ if and only if $q$ splits in $\Q(\sqrt{p})/\Q$ is nothing other than the law of quadratic reciprocity, a common ancestor to much of modern number theory.

Let $K$ be a number field, $\pp$ a prime ideal in its ring of integers $\OO_K$, and $\alpha$ an element of the algebraic closure $\overline{K}$. Suppose we were to ask, as we vary $\pp$, how often $\pp$ splits completely in the extension $K(\alpha)/K$. If $\alpha$ is fixed as $\pp$ varies over all prime ideals in $\OO_K$, a satisfactory answer is provided by the Chebotarev Density Theorem, which is grounded in the theory of $L$-functions and their zero-free regions. The Chebotarev Density Theorem, however, often cannot provide an answer if $\alpha$ varies along with $\pp$ in some prescribed manner. The purpose of this paper is to fill this gap for quadratic extensions in a natural setting that arises in many applications. This setting, which we now describe, is inspired by the work of Friedlander, Iwaniec, Mazur, and Rubin \cite{FIMR} and is amenable to sieve theory involving sums of type I and type II, as opposed to the theory of $L$-functions.

Let $K/\Q$ be a Galois extension of degree $n$. Unlike in \cite{FIMR}, we do \textit{not} impose the very restrictive condition that $\Gal(K/\Q)$ is cyclic. For the moment, let us restrict to the setting where $K$ is totally real and where every totally positive unit in $\OO_K$ is a square, as in \cite{FIMR}. To each non-trivial automorphism $\sigma\in\Gal(K/\Q)$ and each odd principal prime ideal $\pp\subset\OO_K$, we attach the quantity $\spin(\sigma, \pp)\in\{-1, 0, 1\}$, defined as
\begin{equation}
\label{eDSpin}
\spin(\sigma, \pp) = \left(\frac{\pi}{\sigma(\pi)}\right)_{K, 2},
\end{equation}
where $\pi$ is any totally positive generator of $\pp$ and $\left(\frac{\cdot}{\cdot}\right)_{K, 2}$ denotes the quadratic residue symbol in $K$. If we let $\alpha^2 = \sigma^{-1}(\pi)$, then $\spin(\sigma, \pp)$ governs the splitting of $\pp$ in $K(\alpha)$, i.e., $\spin(\sigma, \pp) = 1$ (resp., $-1$, $0$) if $\pp$ is split (resp., inert, ramified) in $K(\alpha)/K$. In \cite{FIMR}, under the assumptions that $\sigma$ generates $\Gal(K/\Q)$, that $n\geq 3$, and that the technical Conjecture $C_n$ (see Section~\ref{conjCn}) holds true, Friedlander et al.\ prove that the natural density of $\pp$ that are split (resp., inert) in $K(\sqrt{\alpha})/K$ is $\frac{1}{2}$ (resp., $\frac{1}{2}$), just as would be the case were $\alpha$ not to vary with $\pp$.

More generally, suppose $S$ is a subset of $\Gal(K/\Q)$ and consider the \textit{joint spin}
$$
s_{\pp} = \prod_{\sigma\in S}\spin(\sigma, \pp),
$$
defined for principal prime ideals $\pp = \pi\OO_K$. If we let $\alpha^2 = \prod_{\sigma\in S}\sigma^{-1}(\pi)$, then $s_{\pp}$ is equal to~$1$ (resp., $-1$, $0$) if $\pp$ is split (resp., inert, ramified) in $K(\alpha)/K$. If $\sigma^{-1}\in S$ for some $\sigma\in S$, then the factor $\spin(\sigma, \pp)\spin(\sigma^{-1}, \pp)$ falls under the purview of the usual Chebotarev Density Theorem as suggested in \cite[p.\ 744]{FIMR} and studied precisely by McMeekin \cite{McMeekin}. We therefore focus on the case that $\sigma\not\in S$ whenever $\sigma^{-1}\in S$ and prove the following equidistribution theorem concerning the joint spin $s_{\pp}$, defined in full generality, also for totally complex fields, in Section~\ref{sDefSpin}.

\begin{theorem}
\label{tJoint}
Let $K/\Q$ be a Galois extension of degree $n$. If $K$ is totally real, we further assume that every totally positive unit in $\OO_K$ is a square. Suppose that $S$ is a non-empty subset of $\Gal(K/\Q)$ such that $\sigma \in S$ implies $\sigma^{-1} \not \in S$. Foe each non-zero ideal $\aaa$ in $\OO_K$, define $s_{\aaa}$ as in \eqref{defSpin}. Assume Conjecture $C_{|S|n}$ holds true with $\delta = \delta(|S|n) > 0$ (see Section~\ref{conjCn}). Let $\epsilon > 0$ be a real number.  Then for all $X\geq 2$, we have
$$
\sum_{\substack{\Norm(\pp)\leq X \\ \pp\text{ prime}}} s_\pp \ll X^{1 - \frac{\delta}{54|S|^2n(12n+1)} + \epsilon},
$$ 
where the implied constant depends only on $\epsilon$ and $K$.
\end{theorem}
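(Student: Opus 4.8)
The plan is to adapt the sieve‑theoretic strategy of \cite{FIMR} --- devised there for a single automorphism of a \emph{cyclic} group --- to an arbitrary subset $S$ of an arbitrary Galois group. The first step is a combinatorial identity of Vaughan (or Heath--Brown) type in $\OO_K$, which writes
\[
\sum_{\substack{\Norm(\pp)\le X\\ \pp\ \mathrm{prime}}} s_\pp
\]
as a linear combination of $O\!\left((\log X)^{O(1)}\right)$ bilinear sums of two shapes: \emph{type I} sums $\sum_{\Norm(\aaa)\le A} a_\aaa \sum_{\Norm(\bb)\le B} s_{\aaa\bb}$, in which the long variable $\bb$ carries no weight (or a smooth one) and $A$ is forced to be small, and \emph{type II} sums $\sum_{\Norm(\aaa)\le A}\sum_{\Norm(\bb)\le B} a_\aaa b_\bb\, s_{\aaa\bb}$ with $A$ and $B$ both in an intermediate range; in all cases $AB\asymp X$ and the coefficients are dominated by divisor functions. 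Throughout one fixes a generator for each ideal inside a convenient fundamental domain for the action of $\OO_K^\times$ on $K^\times$; the hypothesis that totally positive units are squares (in the totally real case), together with its analogue for totally complex $K$, makes $s_\aaa$ well defined and trivializes the archimedean factors in the reciprocity law used below.

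The structural fact that drives everything is that the joint spin is \emph{almost} multiplicative: for coprime principal ideals $\aaa=\alpha\OO_K$ and $\bb=\beta\OO_K$ one has
\[
s_{\aaa\bb}=s_\aaa\, s_\bb \prod_{\sigma\in S}\left(\frac{\alpha}{\sigma(\beta)}\right)_{K, 2}\left(\frac{\beta}{\sigma(\alpha)}\right)_{K, 2},
\]
the obstruction to multiplicativity being a genuine quadratic residue symbol that couples $\aaa$ and $\bb$. For the type I sums I would follow \cite{FIMR}: after applying reciprocity in $K$ to each factor, conditioning $\beta$ on its class modulo a fixed power of $2$ so that the resulting $2$-adic corrections become constant, and detecting the range of $\bb$ by Poisson summation, the sum $\sum_{\Norm(\bb)\le B} s_{\aaa\bb}$ (and its dual) reduces to short character sums whose moduli divide products of at most $|S|$ conjugates of $\alpha$ and hence have norm $\le A^{O(|S|)}$; Conjecture $C_{|S|n}$ then yields a power saving, provided $B$ is not too small compared with $A$. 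Replacing the single automorphism of \cite{FIMR} by the set $S$ here is bookkeeping rather than a new idea.

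The type II sums are the crux. Applying Cauchy--Schwarz in the $\aaa$ variable and using $s_\aaa^2=1$ together with the displayed factorization, the off-diagonal contribution is governed by sums of the form
\[
\sum_{\Norm(\aaa)\le A}\ \prod_{\tau\in S\cup S^{-1}}\left(\frac{\tau(\gamma)}{\alpha}\right)_{K, 2},
\]
where $\gamma$ is a generator of $\bb_1\bb_2$, after one further use of reciprocity and another conditioning of $\gamma$ modulo a fixed power of $2$. This is a quadratic character sum in $\aaa$ whose modulus divides $\prod_{\tau\in S\cup S^{-1}}\tau(\gamma)$, an ideal of norm $\le B^{4|S|}$. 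Since $S$ and $S^{-1}$ are disjoint by hypothesis, $\prod_{\tau\in S\cup S^{-1}}\tau(\gamma)$ is a product of $2|S|$ \emph{distinct} conjugates of $\gamma$, so for every pair $(\bb_1,\bb_2)$ outside a sparse exceptional set this ideal is not a square times a unit, the character is nontrivial, and Conjecture $C_{|S|n}$ again supplies a power saving. The diagonal $\bb_1=\bb_2$ and the remaining exceptional pairs --- those for which the conjugates $\tau(\gamma)$ coincide enough to force the product to be a square --- are shown to number $O(B^{2-c})$ (after, if convenient, restricting the coefficients to squarefree support and disposing of the rest separately) by a divisor-type count, and are estimated trivially.

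I expect the main difficulty to be exactly this type II analysis, in two respects. First, making the reciprocity and $2$-adic bookkeeping uniform over all $\sigma\in S$ and all of $\Gal(K/\Q)$, with no cyclicity to lean on, forces a careful choice of auxiliary moduli and a somewhat technical description of the relevant ray class characters, so that every correction produced by reciprocity is absorbed into a bounded number of subsums. Second --- and this is the genuine heart of the matter --- one must show that the set of pairs $(\bb_1,\bb_2)$ for which $\prod_{\tau\in S\cup S^{-1}}\tau(\gamma)$ is a square is indeed sparse; this is a statement about coincidences among the Galois conjugates of a product of two ideals, and it is here that the hypothesis $\sigma\in S\Rightarrow\sigma^{-1}\notin S$ is indispensable: it precludes the automatic cancellation $\left(\frac{\tau(\gamma)}{\alpha}\right)\left(\frac{\tau^{-1}(\gamma)}{\alpha}\right)=1$ that would otherwise collapse the sum and return us to the Chebotarev regime. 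Granting these two points, one balances $A$ against $B$ in both families of sums, optimizes the power saving against the admissible ranges in the combinatorial identity, and recombines the $O\!\left((\log X)^{O(1)}\right)$ pieces to reach the exponent $1-\frac{\delta}{54|S|^2n(12n+1)}+\epsilon$.
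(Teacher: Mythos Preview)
Your high-level plan is the right one --- the sieve of \cite[Proposition~5.2]{FIMR} does reduce the prime sum to a type~I estimate \eqref{typeI} and a type~II estimate \eqref{typeII}, and your type~II sketch is essentially what the paper does. After the factorisation \eqref{spinfactor} the bilinear sum is governed by $\phi(\alpha,\beta)=\prod_{\sigma\in S}\left(\frac{\alpha}{\sigma(\beta)\sigma^{-1}(\beta)}\right)$, and the hypothesis $S\cap S^{-1}=\emptyset$ is used exactly as you say: it guarantees that $\prod_{\sigma\in S}\sigma(\beta)\sigma^{-1}(\beta)$ divides $\Norm(\beta)$ and that the associated character is nontrivial unless $|\Norm(\beta)|$ is squarefull. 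One correction: the resulting bilinear bound (Lemma~\ref{Prop36}, coming from \cite[Proposition~3.6]{KM2}) is \emph{unconditional}; Conjecture~$C_{|S|n}$ plays no role here, and your invocation of it for type~II is unnecessary.

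The genuine gap is in type~I, and you have the difficulty located in the wrong place. Your description --- reduce $\sum_{\bb} s_{\aaa\bb}$ via the multiplicative factorisation to a short character sum in $\beta$ whose modulus is built from conjugates of the \emph{fixed} generator $\alpha$ --- does not match \cite{FIMR} and would not work as stated: after factorisation one is left with $\sum_{\bb} s_{\bb}\cdot\chi(\beta)$, and the spin $s_{\bb}$ is not a character, so no short-character-sum input applies. What \cite{FIMR} and the paper actually do is write the generator of the \emph{whole} ideal as $a+\beta$ with $a\in\Z$ and $\beta\in\MM$, use $\spin(\sigma,a+\beta)=\left(\frac{\beta-\sigma(\beta)}{a+\sigma(\beta)}\right)$, and after reciprocity obtain a character sum in the rational integer $a$ with modulus $q\mid\prod_{\sigma\in S}\Norm(\beta-\sigma(\beta))$; the divisibility condition $\mm\mid\aaa$ becomes an arithmetic-progression constraint on $a$, handled by Corollary~\ref{cscs}.

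More importantly, your claim that passing from one $\sigma$ to a set $S$ is ``bookkeeping rather than a new idea'' is precisely where the proof is hardest. With $|S|>1$ the modulus is a \emph{product} $\prod_{\sigma\in S}\ccc(\sigma,\beta)$, and two new obstructions appear that are absent in \cite{FIMR}: large squarefull part of a single $\Norm(\beta-\sigma(\beta))$ must be counted inside the proper submodule $\MM'$ rather than in all of $\OO_K$ (Lemma~\ref{lG}), and large common factors $\gcd\bigl(\Norm(\beta-\sigma(\beta)),\Norm(\beta-\tau(\beta))\bigr)$ for distinct $\sigma,\tau\in S$ must be shown to be rare (Lemma~\ref{lGCD}, which uses the geometric sieve of \cite{BhargavaThegeometricsieve} together with an Artin--Dedekind argument to show the relevant norm forms are coprime). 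The paper says explicitly that the estimates on \cite[pp.~729--733]{FIMR} are not sharp enough here and must be ``significantly improved''; this is the technical heart of the argument, and your proposal does not address it.
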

It may be possible to weaken our condition on $S$ and instead require only that there exists $\sigma \in S$ with $\sigma^{-1} \not \in S$. 

The main theorem in \cite{FIMR} is the special case of Theorem~\ref{tJoint} where $\Gal(K/\Q) = \langle\sigma\rangle$, $n\geq 3$, and $S = \{\sigma\}$. After establishing their equidistribution result, Friedlander et al.\ \cite[p.\ 744]{FIMR} raise the question of the joint distribution of spins, and in particular the case of $\spin(\sigma, \pp)$ and $\spin(\sigma^2, \pp)$ where again $\Gal(K/\Q) = \langle\sigma\rangle$, but $S = \{\sigma, \sigma^2\}$ and $n\geq 5$. The following corollary of Theorem~\ref{tJoint} applied to the set $S = \{\sigma, \sigma^2\}$ answers their question.

\begin{theorem}
\label{cDensities}
Let $K/\Q$ be a totally real Galois extension of degree $n$ such that every totally positive unit in $\OO_K$ is a square. Suppose that $S = \{\sigma_1, \ldots, \sigma_{t}\}$ is a non-empty subset of $\Gal(K/\Q)$ such that $\sigma \in S$ implies $\sigma^{-1} \not \in S$. Assume Conjecture $C_{tn}$ holds true (see Section~\ref{conjCn}). Let $\mathbb{e} = (e_1, \ldots, e_{t})\in\FF_2^{t}$. Then, as $X\rightarrow \infty$, we have
$$
\frac{|\{\pp\text{ principal prime ideal in }\OO_K:\ \Norm(\pp)\leq X,\ \spin(\sigma_i, \pp) = (-1)^{e_i}\text{ for }1\leq i\leq t\}|}{|\{\pp\text{ principal prime ideal in }\OO_K:\ \Norm(\pp)\leq X\}|}\sim \frac{1}{2^t}.
$$
\end{theorem}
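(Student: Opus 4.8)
The plan is to deduce Theorem~\ref{cDensities} from Theorem~\ref{tJoint} by Fourier inversion on $\FF_2^{t}$. The starting point is the elementary identity
$$
\prod_{i=1}^{t}\frac{1+(-1)^{e_i}x_i}{2}=\frac{1}{2^{t}}\sum_{T\subseteq\{1,\dots,t\}}(-1)^{\sum_{i\in T}e_i}\prod_{i\in T}x_i,
$$
valid for every $(x_1,\dots,x_t)\in\{-1,1\}^{t}$, whose left-hand side equals $1$ when $x_i=(-1)^{e_i}$ for all $i$ and $0$ otherwise. I would apply it with $x_i=\spin(\sigma_i,\pp)$. Before doing so, one checks that $\spin(\sigma_i,\pp)\in\{-1,1\}$ for every $i$ and every principal prime $\pp$ with $\Norm(\pp)\le X$ outside a set of size $O(\sqrt{X})$: the primes above $2$, the primes ramified in $K/\Q$ and the primes of residue degree $\ge 2$ are together $O(\sqrt{X})$ in number, while for every remaining $\pp$ the decomposition group $D_{\pp}$ is trivial, hence $\sigma_i(\pp)\neq\pp$ (note that $1\notin S$, since $\sigma\in S$ implies $\sigma^{-1}\notin S$), so the residue symbol defining $\spin(\sigma_i,\pp)$ in \eqref{eDSpin} is a unit.

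Write $N(X)$ for the number of principal prime ideals of $\OO_K$ of norm at most $X$ and $N_{\mathbb{e}}(X)$ for the numerator appearing in the statement. Summing the identity above over those $\pp$ and isolating the term $T=\varnothing$ (an empty product, equal to $1$) gives
$$
N_{\mathbb{e}}(X)=\frac{1}{2^{t}}\,N(X)+\frac{1}{2^{t}}\sum_{\varnothing\neq T\subseteq\{1,\dots,t\}}(-1)^{\sum_{i\in T}e_i}\!\!\sum_{\substack{\Norm(\pp)\le X\\ \pp\ \mathrm{prime}}}\!\!s^{(T)}_{\pp}+O(\sqrt{X}),
$$
where $s^{(T)}_{\pp}$ is the joint spin of the set $S_T:=\{\sigma_i:i\in T\}$ (as in \eqref{defSpin}), so the inner sum is precisely the quantity bounded by Theorem~\ref{tJoint} for $S_T$. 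Each $S_T$ is a non-empty subset of $\Gal(K/\Q)$ still satisfying $\sigma\in S_T\Rightarrow\sigma^{-1}\notin S_T$, and the instance of Conjecture $C_{|S_T|n}$ required for it follows from Conjecture $C_{tn}$ because $|S_T|\le t$; hence Theorem~\ref{tJoint} applies to every such $S_T$ and yields $\sum_{\Norm(\pp)\le X}s^{(T)}_{\pp}\ll X^{1-\eta+\epsilon}$ for a fixed $\eta=\eta(K,S)>0$, any $\epsilon>0$, and an implied constant depending only on $\epsilon$, $K$ and $S$. Since there are only finitely many subsets $T$, the middle term above is $O(X^{1-\eta+\epsilon})$.

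Finally one needs a lower bound for the denominator. By the prime ideal theorem together with the Chebotarev density theorem for the Hilbert class field $H/K$ — a degree-one prime $\pp$ of $K$ is principal precisely when it splits completely in $H/K$ — we obtain $N(X)\sim h_K^{-1}X/\log X$, in particular $N(X)\gg X/\log X$. Dividing the displayed equation by $N(X)$ and fixing any $\epsilon<\eta$ then gives
$$
\frac{N_{\mathbb{e}}(X)}{N(X)}=\frac{1}{2^{t}}+O\!\bigl(X^{-\eta+\epsilon}\log X\bigr)\longrightarrow\frac{1}{2^{t}}\qquad(X\to\infty),
$$
which is the desired statement. The real work is carried out by Theorem~\ref{tJoint}; within this deduction there is no substantial obstacle, and the only two points requiring any care are the verification that the spins lie in $\{-1,1\}$ off a negligible set, so that the Fourier identity applies with no correction term, and the classical lower bound $N(X)\gg X/\log X$, which is exactly what lets the power saving of Theorem~\ref{tJoint} dominate the error.
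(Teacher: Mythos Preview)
Your deduction is correct and is exactly the standard Fourier-inversion argument the paper has in mind when it calls Theorem~\ref{cDensities} a corollary of Theorem~\ref{tJoint}; the paper gives no separate proof. One small point worth making explicit: when you invoke \eqref{defSpin} for $S_T$, take $\psi\equiv 1$ so that in the totally real case $s^{(T)}_{\pp}=\prod_{i\in T}\spin(\sigma_i,\pp)$ on the nose, and your verification that $C_{tn}$ implies $C_{|S_T|n}$ (by rescaling $Q$ to $N^{tn}$ when $N>Q^{1/(tn)}$) is indeed valid.
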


We expect that Theorem~\ref{tJoint} has several algebraic applications; see for example the original work of Friedlander et al. \cite{FIMR}, but also \cite{KM1}, \cite{KM2}, and~\cite{Milovic}. Here we give one such application by giving a negative answer to a conjecture of Cohn and Lagarias~\cite{CL}. Given an integer $k\geq 1$ and a finite abelian group $A$, we define the $2^k$-rank of $A$ as
\[
\text{rk}_{2^k} A = \dim_{\FF_2} 2^{k - 1} A/2^k A.
\]
Cohn and Lagarias~\cite{CL} considered the one-prime-parameter families of quadratic number fields $\{\Q(\sqrt{dp})\}_p$, where $d$ is a fixed integer $\not\equiv 2\bmod 4$ and $p$ varies over primes such that $dp$ is a fundamental discriminant. Bolstered by ample numerical evidence as well as theoretical examples \cite{CL2}, they conjectured that for every $k\geq 1$ and $d\not\equiv 2 \bmod 4$, there exists a governing field $M_{d, k}$ for the $2^k$-rank of the narrow class group $\Cl(\Q(\sqrt{dp}))$ of $\Q(\sqrt{dp})$, i.e., there exists a finite normal extension $M_{d, k}/\Q$ and a class function
$$
\phi_{d, k}:\ \Gal(M_{d, k}/\Q)\rightarrow \Z_{\geq 0}
$$ 
such that 
\begin{equation}\label{govEq}
\phi_{d, k}(\mathrm{Art}_{M_{d, k}/\Q}(p)) = \rk_{2^k}\Cl(\Q(\sqrt{dp})),
\end{equation}
where $\mathrm{Art}_{M_{d, k}/\Q}(p)$ is the Artin conjugacy class of $p$ in $\Gal(M_{d, k}/\Q)$. This conjecture was proven for all $k \leq 3$ by Stevenhagen \cite{Stevenhagen}, but no governing field has been found for any value of $d$ if $k \geq 4$. Interestingly enough, Smith \cite{Smith} recently introduced the notion of relative governing fields and used them to deal with distributional questions for $\Cl(K)[2^\infty]$ for imaginary quadratic fields $K$. Our next theorem, which we will prove in Section~\ref{noGovField}, is a relatively straightforward consequence of Theorem \ref{tJoint}.

\begin{theorem}
Assume conjecture $C_n$ for all $n$. Then there is no governing field for the $16$-rank of $\Q(\sqrt{-4p})$; in other words, there does not exist a field $M_{-4, 4}$ and class function $\phi_{-4, 4}$ satisfying~\eqref{govEq}.
\end{theorem}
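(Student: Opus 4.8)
The plan is to derive a contradiction from the existence of a governing field $M = M_{-4,4}$ for the $16$-rank of $\Cl(\Q(\sqrt{-4p})) = \Cl(\Q(\sqrt{-p}))$. The strategy is to compare the statistical behaviour forced by the existence of a governing field with the equidistribution of a suitable joint spin symbol provided by Theorem~\ref{tJoint}. First I would recall the known structure of the $16$-rank in this family: by work of R\'edei (for the $4$-rank) and of Stevenhagen, Milovic, and others, for primes $p \equiv 1 \bmod 4$ the $8$-rank is governed by a finite field and, crucially, on a positive-density subset of primes $p$ (those lying in a fixed Frobenius class that forces the $4$-rank and $8$-rank to take prescribed small values), the $16$-rank is controlled by a spin-type symbol $\spin(\sigma, \pp)$ for a suitable real quadratic (or higher) overfield $K$ and a generator $\sigma$ of $\Gal(K/\Q)$, where $\pp$ is a prime of $K$ above $p$. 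In other words, on this subfamily, $\rk_{16}\Cl(\Q(\sqrt{-p}))$ jumps by one precisely according to the value of a spin symbol $s_\pp$.

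Next I would suppose, for contradiction, that a governing field $M$ exists. Enlarging $M$ if necessary, I may assume $M$ contains $K$ and all the auxiliary fields controlling the $4$- and $8$-ranks, so that the condition ``$p$ lies in the chosen subfamily'' is itself a union of Frobenius classes in $\Gal(M/\Q)$. Restricting attention to that subfamily, the governing property~\eqref{govEq} says that the value of the spin symbol $s_\pp$ — equivalently the parity of the $16$-rank jump — is a class function of $\Frob_M(p)$. By the Chebotarev Density Theorem applied in $M$, this would force $s_\pp$ to be \emph{constant} on each Frobenius class, hence on a positive density of primes the partial sums $\sum_{\Norm(\pp) \le X} s_\pp$ would grow linearly in the number of such primes, i.e.\ like $X/\log X$ up to a positive constant, rather than being $o(X/\log X)$.

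The contradiction then comes from Theorem~\ref{tJoint} (in its single-spin form, or its corollary Theorem~\ref{cDensities} with $t = 1$), which under Conjecture $C_n$ gives cancellation $\sum_{\Norm(\pp) \le X} s_\pp \ll X^{1 - c + \epsilon}$ for some $c > 0$ depending only on $K$; restricting the sum to a Frobenius subfamily (a congruence-type condition that can be absorbed into the sieve, or handled by summing over a finite family of spin sums twisted by characters of the relevant Galois group) preserves this power saving. Hence $s_\pp$ cannot be constant on a positive-density class, and the governing field cannot exist. The main obstacle — and the part requiring genuine care rather than routine bookkeeping — is the reduction step: precisely identifying the real (or higher-degree) Galois overfield $K$, the subset $S \subset \Gal(K/\Q)$, and the Frobenius condition on $p$ for which the $16$-rank of $\Cl(\Q(\sqrt{-p}))$ is genuinely expressed by the joint spin $s_\pp$, and checking that $K$ satisfies the hypotheses of Theorem~\ref{tJoint} (totally real with every totally positive unit a square, $S$ containing no pair $\sigma, \sigma^{-1}$). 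This is where the detailed $2$-descent computations of Section~\ref{noGovField} enter; once that dictionary is in place, the density argument above closes the proof.
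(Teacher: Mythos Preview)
Your broad outline---governing field forces constancy on a Frobenius class, Theorem~\ref{tJoint} forces oscillation, contradiction---is the right shape, but there is a real gap in how you set up the spin side, and it is precisely the step you flag as ``the main obstacle.''

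First, a concrete error: you say one must check that the overfield $K$ is ``totally real with every totally positive unit a square.'' But any field in which the $16$-rank criterion for $\Q(\sqrt{-4p})$ can be written as a spin contains $E = \Q(\zeta_8,\sqrt{1+i})$ and is therefore totally complex. Theorem~\ref{tJoint} does cover the totally complex case (with no unit hypothesis), so this is not fatal, but it signals that you have not correctly identified $K$.

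More seriously, in your scheme $K$ is a \emph{fixed} field (essentially $E$, where the $16$-rank criterion lives as a single spin), and the hypothetical governing field $M \supseteq K$ enters only to restrict $p$ to a Frobenius class in $\Gal(M/\Q)$. You then need cancellation in $\sum s_{\pp}$ \emph{restricted to that class}. But Theorem~\ref{tJoint} only allows twisting by a congruence condition $\psi$ modulo a fixed ideal of $\OO_K$; a Frobenius condition in a possibly non-abelian extension $M/K$ is not of this form, and ``absorbing it into the sieve'' or ``twisting by characters'' does not go through in general. This is a genuine gap.

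The paper sidesteps this entirely by a trick you have missed: it takes $K$ to be the governing field $M$ itself (enlarged to contain $E$), and takes $S \subset \Gal(K/\Q)$ to be the full preimage of a fixed order-$4$ element $r \in \Gal(E/\Q) \cong D_4$. A short computation with the norm map shows that the joint spin $\prod_{\sigma \in S}\spin(\sigma,w)$ over $K$ collapses to the single spin $\left(\frac{r(\pi)}{\pi}\right)_{E,2}$ over $E$ (up to a congruence factor absorbed into $\psi$), so $s_{\pp}$ still detects $16\mid h(-4p)$. Now no Frobenius restriction is needed: the degree-one primes $\pp$ of $K$ are exactly the $p$ splitting completely in the governing field, hence all lie in the \emph{same} (trivial) Artin class, so the governing hypothesis forces $s_{\pp}$ to be literally constant on them. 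Theorem~\ref{tJoint} applied directly to $K$ and $S$ then gives the contradiction. The condition $\sigma\in S\Rightarrow\sigma^{-1}\notin S$ follows because $r$ has order $4$ in $D_4$.
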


\subsection*{Acknowledgments}
The authors are very grateful to Carlo Pagano for useful discussions. We would also like to thank Peter Sarnak for making us aware of the useful reference \cite{BhargavaThegeometricsieve}.

\section{Prerequisites}
Here we collect certain facts about quadratic residue symbols and unit groups in number fields that are necessary to give a rigorous definition of spins of ideals and that are useful in our subsequent arguments.

Throughout this section, let $K$ be a number field which is Galois of degree $n$ over $\Q$. Then either $K$ is totally real, as in \cite{FIMR}, or $K$ is totally complex, in which case $n$ is even. An element $\alpha\in K$ is called \textit{totally positive} if $\iota(\alpha)>0$ for all real embeddings $\iota: K\hookrightarrow \mathbb{R}$; if this is the case, we will write $\alpha\succ 0$. If $K$ is totally complex, there are no real embeddings of $K$ into $\mathbb{R}$, and so $\alpha\succ 0$ for every $\alpha\in K$ vacuously. Let $\OO_K$ denote the ring of integers of $K$. If $K$ is totally real, we assume that
\begin{equation}\label{unitcondition}
(\OO_K^{\times})^{2} = \left\{u^2: u\in\OO_K^{\times}\right\} = \left\{u\in\OO_K^{\times}: u\succ 0\right\} = (\OO_K^{\times})_+,
\end{equation}
where the first and last equalities are definitions and the middle equality is the assumption. This assumption, present in \cite{FIMR}, implies that the narrow and the ordinary class groups of $K$ coincide, and hence that every non-zero principal ideal $\aaa$ in $\OO_K$ can be written as $\aaa = \alpha\OO_K$ for some $\alpha\succ 0$. If $K$ is totally complex, then the narrow and the ordinary class groups of $K$ coincide vacuously. In either case, we will let $\Cl = \Cl(K)$ and $h = h(K)$ denote the (narrow) class group and the (narrow) class number of $K$.
 
\subsection{Quadratic residue symbols and quadratic reciprocity}
We define the quadratic residue symbol in $K$ in the standard way. That is, given an odd prime ideal $\pp$ of $\OO_K$ (i.e., a prime ideal having odd absolute norm), and an element $\alpha\in\OO_K$, define $\left(\frac{\alpha}{\pp}\right)_{K, 2}$ as the unique element in $\{-1, 0, 1\}$ such that 
$$
\left(\frac{\alpha}{\pp}\right)_{K, 2} \equiv \alpha^{\frac{\Norm_{K/\Q}(\pp) - 1}{2}} \bmod \pp.
$$
Given an odd ideal $\bbb$ of $\OO_K$ with prime ideal factorization $\bbb = \prod_{\pp}\pp^{e_{\pp}}$, define
$$
\left(\frac{\alpha}{\bbb}\right)_{K, 2} = \prod_{\pp}\left(\frac{\alpha}{\pp}\right)_{K, 2}^{e_{\pp}}.
$$
Finally, given an element $\beta\in\OO_K$, let $(\beta)$ denote the principal ideal in $\OO_K$ generated by $\beta$. We say that $\beta$ is odd if $(\beta)$ is odd and we define
$$
\left(\frac{\alpha}{\beta}\right)_{K, 2} = \left(\frac{\alpha}{(\beta)}\right)_{K, 2}.
$$
We will suppress the subscripts $K, 2$ when there is no risk of ambiguity. Although \cite{FIMR} focuses on a special type of totally real Galois number fields, the version of quadratic reciprocity stated in \cite[Section 3]{FIMR} holds and was proved for a general number field. We recall it here. For a place $v$ of $K$, finite or infinite, let $K_v$ denote the completion of $K$ with respect to $v$. Let $(\cdot, \cdot)_v$ denote the Hilbert symbol at $v$, i.e., given $\alpha, \beta\in K$, we let $(\alpha, \beta)_v\in\{-1, 1\}$ with $(\alpha, \beta)_v = 1$ if and only if there exists $(x, y, z)\in K_v^3\setminus\{(0, 0, 0)\}$ such that $x^2 - \alpha y^2 - \beta z^2 = 0$. As in \cite[Section 3]{FIMR}, define
$$
\mu_2(\alpha, \beta) = \prod_{v\mid 2}(\alpha, \beta)_v\quad\text{and}\quad\mu_{\infty}(\alpha, \beta) = \prod_{v\mid\infty}(\alpha, \beta)_v.
$$
The following lemma is a consequence of the Hilbert reciprocity law and local considerations at places above $2$; see \cite[Lemma 2.1, Proposition 2.2, and Lemma 2.3]{FIMR}. 
\begin{lemma}
Let $\alpha, \beta\in\OO_K$ with $\beta$ odd. Then $\mu_{\infty}(\alpha, \beta)\left(\frac{\alpha}{\beta}\right)$ depends only on the congruence class of $\beta$ modulo $8\alpha$. Moreover, if $\alpha$ is also odd, then 
$$
\left(\frac{\alpha}{\beta}\right) = \mu_{2}(\alpha, \beta)\mu_{\infty}(\alpha, \beta)\left(\frac{\beta}{\alpha}\right).
$$
The factor $\mu_2(\alpha, \beta)$ depends only on the congruence classes of $\alpha$ and $\beta$ modulo $8$.
\end{lemma}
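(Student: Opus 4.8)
The statement is essentially a repackaging of Hilbert reciprocity together with a local analysis at the finitely many bad places (those above $2$), so the plan is to reduce everything to these standard ingredients. Write $S_\infty$, $S_2$, and $S_{\mathrm{odd}}$ for the sets of infinite places, places above $2$, and finite odd places of $K$. I would begin by recording the well-known identification
$$
\left(\frac{\alpha}{\pp}\right) = \prod_{w \mid \pp}(\alpha, \beta)_w
$$
for an odd prime $\pp = (\beta)$ locally at each $w$; more precisely, for $\beta$ odd the product of Hilbert symbols $\prod_{w \in S_{\mathrm{odd}}}(\alpha,\beta)_w$ collapses to $\left(\frac{\alpha}{\beta}\right)$, because at a finite odd place $w$ not dividing $\beta$ we have $(\alpha,\beta)_w = 1$ when $\alpha$ is a $w$-unit or $w$-adically a square times a unit, and at $w \mid \beta$ the tame Hilbert symbol formula gives exactly the Legendre-type symbol $\left(\frac{\alpha}{w}\right)^{\ord_w(\beta)}$. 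This is precisely the content of the cited \cite[Lemma 2.1]{FIMR}, so I would invoke it rather than reprove it.

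Next I would apply Hilbert reciprocity $\prod_{w}(\alpha,\beta)_w = 1$, where $w$ runs over all places of $K$. Splitting the product as $\prod_{S_\infty}\cdot\prod_{S_2}\cdot\prod_{S_{\mathrm{odd}}}$ and using the previous paragraph to rewrite the odd part, one gets
$$
\mu_\infty(\alpha,\beta)\,\mu_2(\alpha,\beta)\left(\frac{\alpha}{\beta}\right) = 1
$$
whenever $\alpha$ is also odd — note the odd part splits as the contribution of $w \mid \beta$, giving $\left(\frac{\alpha}{\beta}\right)$, times the contribution of $w \mid \alpha$, $w \nmid \beta$, giving $\left(\frac{\beta}{\alpha}\right)$, since these are the only odd places where either element fails to be a unit. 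Rearranging and using $\mu_\infty, \mu_2 \in \{\pm 1\}$ yields the reciprocity formula
$$
\left(\frac{\alpha}{\beta}\right) = \mu_2(\alpha,\beta)\,\mu_\infty(\alpha,\beta)\left(\frac{\beta}{\alpha}\right),
$$
which is \cite[Proposition 2.2]{FIMR}. The dependence claims are then local statements: at a place $w \mid 2$, the Hilbert symbol $(\alpha,\beta)_w$ depends only on $\alpha,\beta$ modulo a fixed power of $w$, and the worst such power across all $w \mid 2$ is bounded by the $2$-adic valuation of $8\OO_K$ together with the different, which for the purposes of the congruence-mod-$8$ statement one checks is absorbed by reducing mod $8$; this gives that $\mu_2(\alpha,\beta)$ depends only on $\alpha \bmod 8$ and $\beta \bmod 8$, and is \cite[Lemma 2.3]{FIMR}.

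For the first assertion — that $\mu_\infty(\alpha,\beta)\left(\frac{\alpha}{\beta}\right)$ depends only on $\beta \bmod 8\alpha$ — I would argue as follows. If $\beta \equiv \beta' \bmod 8\alpha\OO_K$ with both odd, then at every finite odd place $w \nmid \alpha$ we have $(\alpha,\beta)_w = (\alpha,\beta')_w$ since $\beta/\beta'$ is a $w$-adic square-free-of-obstruction unit there, while at $w \mid \alpha$ the congruence $\beta \equiv \beta' \bmod 8\alpha$ forces $\beta \equiv \beta'$ to high enough $w$-adic precision that $(\alpha,\beta)_w = (\alpha,\beta')_w$; at $w \mid 2$, again $\beta \equiv \beta' \bmod 8$ suffices by the local continuity of the Hilbert symbol; and at $w \mid \infty$, the sign of $\beta$ and $\beta'$ under each real embedding may differ, but that difference is exactly compensated by the corresponding change in $\mu_\infty(\alpha,\beta)$ versus the odd-symbol side — more cleanly, one invokes reciprocity to replace $\left(\frac{\alpha}{\beta}\right)$ by $\mu_2 \mu_\infty \left(\frac{\beta}{\alpha}\right)$ and observes $\left(\frac{\beta}{\alpha}\right)$ depends only on $\beta \bmod \alpha$ while the two $\mu_\infty$ factors cancel in the product $\mu_\infty(\alpha,\beta)\left(\frac{\alpha}{\beta}\right) = \mu_2(\alpha,\beta)\left(\frac{\beta}{\alpha}\right)$, reducing the claim to the already-established dependence of $\mu_2$ on $\beta \bmod 8$. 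The main obstacle is purely bookkeeping: correctly tracking the precise $w$-adic precision needed at the wild places $w \mid 2$ and confirming that "modulo $8\alpha$" (rather than a larger ideal involving the different) genuinely suffices. Since all of this is carried out in \cite{FIMR} for a general number field, I would structure the proof as a short deduction citing \cite[Lemma 2.1, Proposition 2.2, Lemma 2.3]{FIMR} and assembling them in the order above.
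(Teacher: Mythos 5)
The paper itself offers no proof here: it records the lemma and simply points to \cite[Lemma 2.1, Proposition 2.2, Lemma 2.3]{FIMR}, remarking that it follows from Hilbert reciprocity plus local analysis at the places above~$2$. Your proposal does the same thing at the level of citations, so on that count you are in agreement with the paper. However, the supplementary sketch you interpose contains a genuine gap in the argument for the first assertion, and since that is the only place where you go beyond the citation it is worth flagging.

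The issue is your treatment of the dependence of $\mu_{\infty}(\alpha,\beta)\left(\frac{\alpha}{\beta}\right)$ on $\beta \bmod 8\alpha$. Your ``direct'' route argues place by place: at odd $w \nmid \alpha$ you claim $(\alpha,\beta)_w = (\alpha,\beta')_w$. This is false as a pointwise statement. If $\beta \equiv \beta' \bmod 8\alpha$ but $w \nmid 8\alpha$, nothing forces $\operatorname{ord}_w(\beta) = \operatorname{ord}_w(\beta')$ --- the set of odd places dividing $\beta$ changes with $\beta$, and at such a place the tame symbol $(\alpha,\beta)_w = \left(\frac{\alpha}{w}\right)^{\operatorname{ord}_w(\beta)}$ can be $-1$ while $(\alpha,\beta')_w=1$. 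Only the product over \emph{all} places is controlled, via Hilbert reciprocity; there is no local cancellation of the kind you describe at $\infty$ either. Your ``cleaner'' route then invokes the reciprocity formula $\left(\frac{\alpha}{\beta}\right)=\mu_2\mu_\infty\left(\frac{\beta}{\alpha}\right)$, but that formula is stated (and only makes sense) when $\alpha$ is odd, which the first assertion does not assume; for even $\alpha$ the symbol $\left(\frac{\beta}{\alpha}\right)$ is not defined. The repair is to run Hilbert reciprocity against the \emph{odd part} $\alpha'$ of $(\alpha)$: one gets $\mu_\infty(\alpha,\beta)\left(\frac{\alpha}{\beta}\right)=\mu_2(\alpha,\beta)\left(\frac{\beta}{\alpha'}\right)$ (assuming $(\alpha,\beta)$ coprime; the $\gcd\neq 1$ case is trivial since $\beta\equiv\beta'\bmod\alpha$ preserves common odd divisors). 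The right-hand side now manifestly depends only on $\beta\bmod 8$ (for $\mu_2$, which is the content of \cite[Lemma 2.3]{FIMR}) and on $\beta\bmod\alpha'$ (for the Jacobi-type symbol), hence on $\beta\bmod 8\alpha$. With that fix your assembly matches what \cite{FIMR} actually does, and the deduction goes through.
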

We remark that if $K$ is totally complex, then  $(\alpha, \beta)_{\infty} = 1$ for all $\alpha, \beta\in K$. Also, if $K$ is a totally real Galois number field and $\beta\in K$ is totally positive, then again $(\alpha, \beta)_{\infty} = 1$ for all $\alpha \in K$.

\subsection{Class group representatives}
As in \cite[p. 707]{FIMR}, we define a set of ideals $\Cl$ and an ideal $\ff$ of $\OO_K$ as follows. Let $C_i$, $1\leq i \leq h$, denote the $h$ ideal classes. For each $i\in\{1, \ldots, h\}$, we choose two distinct odd ideals belonging to $C_i$, say $\AAA_i$ and $\BBB_i$, so as to ensure that, upon setting
$$
\Cl_a = \{\AAA_1, \ldots, \AAA_h\},\quad \Cl_b = \{\BBB_1, \ldots, \BBB_h\},\quad\Cl = \Cl_a\cup\Cl_b,
$$
and
$$
\ff = \prod_{\ccc\in\Cl}\ccc = \prod_{i = 1}^h\AAA_i\BBB_i,
$$
the norm 
$$
f = \Norm(\ff)
$$
is squarefree. We define
\begin{equation}
\label{bigF}
F := 2^{2h + 3} f D_K,
\end{equation}
where $D_K$ is the discriminant of $K$.
 
\subsection{Definition of joint spin}
\label{sDefSpin}
We define a sequence $\{ s_{\aaa} \}_{\aaa}$ of complex numbers indexed by non-zero ideals $\aaa\subset\OO_K$ as follows. Let $S$ be a non-empty subset of $\Gal(K/\Q)$ such that $\sigma\not\in S$ whenever $\sigma^{-1}\in S$. We define $r(\aaa)$ to be the indicator function of an ideal $\aaa$ of $\OO_K$ to be odd and principal, i.e.,
$$
r(\aaa) = 
\begin{cases}
1 & \text{if there exists an odd }\alpha\in\OO_K\text{ such that }\aaa = \alpha\OO_K \\
0 & \text{otherwise.}
\end{cases}
$$
Define $r_+(\alpha)$ to be the indicator function of an element $\alpha\in K$ to be totally positive, i.e.,
$$
r_+(\alpha) = 
\begin{cases}
1 & \text{if }\alpha\succ 0\\
0 & \text{otherwise.}
\end{cases}
$$
Note that if $K$ is a totally complex number field, then vacuously $r_+(\alpha) = 1$ for all $\alpha$ in $K$. If $\alpha\in K$ is odd and $r_+(\alpha) = 1$, then we define
$$
\spin(\sigma, \alpha) = \left(\frac{\alpha}{\sigma(\alpha)}\right).
$$
Fix a decomposition $\OO_K^{\times} = T_K\times V_K$, where $T_K\subset \OO_K^{\times}$ is the group of units of $\OO_K$ of finite order and $V_K\subset \OO_K^{\times}$ is a free abelian group of rank $r_K$ (i.e., $r_K = n-1$ if $K$ is totally real and $r_K = \frac{n}{2}-1$ if $K$ is totally complex). With $F$ as in \eqref{bigF}, suppose that
\begin{equation}\label{veModF}
\psi: (\OO_K/F\OO_K)^{\times}\rightarrow \mathbb{C}
\end{equation}
is a map such that $\psi(\alpha \bmod F) = \psi(\alpha u^2 \bmod F)$ for all $\alpha\in\OO_K$ coprime to $F$ and all $u\in\OO_K^{\times}$. We define
\begin{equation}\label{defSpin}
s_{\aaa} = r(\aaa)\sum_{t\in T_K}\sum_{v\in V_K/V_K^2}r_+(tv\alpha)\psi(tv\alpha\bmod F)\prod_{\sigma\in S}\spin(\sigma, tv\alpha),
\end{equation}
where $\alpha$ is any generator of the ideal $\aaa$ satisfying $r(\aaa) = 1$. The averaging over $V_K/V_K^2$ makes the \textit{spin} $s_{\aaa}$ a well-defined function of $\aaa$ since, for any unit $u\in \OO_K^{\times}$, any totally positive $\alpha\in\OO_K$ of odd absolute norm, and any $\sigma\in S$, we have
$$
\spin(\sigma, u^2\alpha) = \left(\frac{u^2\alpha}{\sigma(u^2\alpha)}\right) = \left(\frac{u^2\alpha}{\sigma(\alpha)}\right) = \left(\frac{\alpha}{\sigma(\alpha)}\right) = \spin(\sigma, \alpha).
$$
If $K$ is a totally real (in which case we assume that $K$ satisfies \eqref{unitcondition}), then, for an ideal $\aaa = \alpha\OO_K$, there is one and only one choice of $t\in T_K$ and $v\in V_K/V_K^2$ such that $r_+(tv\alpha) = 1$. Hence in this case
$$
s_{\aaa} = r(\aaa)\psi(\alpha\bmod F)\prod_{\sigma\in S}\spin(\sigma, \alpha),
$$
where $\alpha$ is any totally positive generator of $\aaa$. If in addition $n\geq 3$, $\Gal(K/\Q) = \langle\sigma\rangle$, and $S = \{\sigma\}$, then $s_{\aaa}$ coincides with $\spin(\sigma, \aaa)$ in \cite[(3.4), p. 706]{FIMR}. If we take instead $S = \{\sigma, \sigma^2\}$ and assume $n\geq 5$, then the distribution of $s_{\aaa}$ has implications for \cite[Problem, p. 744]{FIMR}.

If $K$ is totally complex, then vacuously $r_+(tv\alpha) = 1$ for all $t\in T_K$ and $v\in V_K/V_K^2$, so the definition of $s_{\aaa}$ specializes to
$$
s_{\aaa} = r(\aaa)\sum_{t\in T_K}\sum_{v\in V_K/V_K^2}\psi(tv\alpha\bmod F)\prod_{\sigma\in S}\spin(\sigma, tv\alpha).
$$

\subsection{Fundamental domains}
\label{fundDom}
We will need a suitable fundamental domain $\DD$ for the action of the units on elements in $\OO_K$.

In case that $K$ is totally real and satisfies \eqref{unitcondition}, we take $\DD\subset \mathbb{R}_+^n$ to be the same as in \cite[(4.2), p. 713]{FIMR}. We fix a numbering of the $n$ real embeddings $\iota_1, \ldots, \iota_n: K\hookrightarrow \mathbb{R}$, and we say that $\alpha\in\DD$ if and only if $(\iota_1(\alpha), \ldots, \iota_n(\alpha))\in\DD$. Hence every non-zero $\alpha\in\DD$ is totally positive. Because of the assumption \eqref{unitcondition}, every non-zero principal ideal in $\OO_K$ has a totally positive generator, and $\DD$ is a fundamental domain for the action of $(\OO_K)_+^{\times}$ on the totally positive elements in $\OO_K$, in the sense of \cite[Lemma 4.3, p. 715]{FIMR}. 

In case that $K$ is totally complex, we take $\DD\subset \mathbb{R}^n$ to be the same as in \cite[Lemma 3.5, p. 10]{KM1}. In this case, we fix an integral basis $\{\eta_1, \ldots, \eta_n\}$ for $\OO_K$, and if $\alpha = a_1\eta_1+\cdots+a_n\eta_n \in K$, $a_1, \ldots, a_n\in\Q$, we say that $\alpha\in\DD$ if and only if $(a_1, \ldots, a_n)\in\DD$. Every non-zero principal ideal $\aaa$ in $\OO_K$ has exactly $|T_K|$ generators in $\DD$; moreover, if one of the generators of $\aaa$ in $\DD$ is $\alpha$, say, then the set of generators of $\aaa$ in $\DD$ is $\{t\alpha: t\in T_K\}$.

The main properties of $\DD$ are listed in \cite[Lemma 4.3, Lemma 4.4, Corollary 4.5]{FIMR} and \cite[Lemma 3.5]{KM2}. We will often use the property that if an element $\alpha\in\DD\cap\OO_K$ of norm $\Norm(\alpha)\leq X$ is written in an integral basis $\eta = \{\eta_1, \ldots, \eta_n\}$ as $\alpha = a_1\eta_1+\cdots+a_n\eta_n \in \OO_K$, $a_1, \ldots, a_n\in\Z$, then
$$
|a_i|\ll X^{\frac{1}{n}}
$$
for $1\leq i\leq n$ where the implied constant depends only on $\eta$.

\subsection{Short character sums}
\label{conjCn}
The following is a conjecture on short character sums appearing in \cite{FIMR}. It is essential for the estimates for sums of type I. 
\begin{conjecture}
\label{cn}
For all integers $n \geq 3$ there exists $\delta(n) > 0$ such that for all $\epsilon > 0$ there exists a constant $C(n, \epsilon) > 0$ with the property that for all integers $M$, all integers $Q \geq 3$, all integers $N \leq Q^{\frac{1}{n}}$ and all real non-principal characters $\chi$ of modulus $q \leq Q$ we have
\[
\left|\sum_{M < m \leq M + N} \chi(m)\right| \leq C(n, \epsilon) Q^{\frac{1 - \delta(n)}{n} + \epsilon}.
\]
\end{conjecture}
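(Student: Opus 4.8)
Conjecture $C_n$ asks for a uniform power saving in real character sums as short as $q^{1/n}$; it is a theorem for $n=3$ but open for $n\ge 4$, so I would split the argument into an unconditional part and a part conditional on the Generalized Riemann Hypothesis (GRH). In every case I would first reduce to primitive characters. A real non-principal character $\chi\bmod q$ is induced by a real primitive character $\chi^{*}\bmod q^{*}$ with $q^{*}\mid q$ and $q^{*}$ cubefree; writing $\chi(m)=\chi^{*}(m)\sum_{e\mid (m,\,\mathrm{rad}(q/q^{*}))}\mu(e)$ and pulling the $e$-sum outside expresses $\sum_{M<m\le M+N}\chi(m)$ as a combination of at most $\tau(q)\ll_{\epsilon}Q^{\epsilon}$ sums of $\chi^{*}$ over progressions of length $\le N$. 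One may also discard the range $N\le Q^{(1-\delta)/n}$, where the trivial bound $\big|\sum_{M<m\le M+N}\chi(m)\big|\le N$ already gives what is wanted.

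For $n=3$ I would invoke the Burgess bound: for every integer $r\ge 1$ and every $\epsilon>0$,
$$
\Big|\sum_{M<m\le M+N}\chi^{*}(m)\Big|\ll_{r,\epsilon}N^{1-\frac{1}{r}}\,(q^{*})^{\frac{r+1}{4r^{2}}+\epsilon},
$$
which for cubefree $q^{*}$ is admissible with $r=4$. Inserting $N\le Q^{1/3}$ and $q^{*}\le Q$ gives exponent $\tfrac13\big(1-\tfrac14\big)+\tfrac{5}{64}=\tfrac{21}{64}<\tfrac13$, so Conjecture $C_3$ holds with $\delta(3)=\tfrac{1}{64}$ (optimizing over $r$, or using refinements of Burgess, only improves this). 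This scheme produces a positive $\delta(n)$ precisely when $r(4-n)>n$ has an integer solution $r$, i.e.\ only for $n=3$: once $n\ge 4$ the length $q^{1/n}$ falls below the Burgess threshold $q^{1/4+o(1)}$ and the bound above is no longer a saving.

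For $n\ge 4$ I would instead assume GRH for each $L(s,\chi^{*})$. The standard argument via Perron's formula, shifting the contour to $\Re s=\tfrac12+\epsilon$ (there is no pole, since $\chi^{*}$ is non-principal) and using the GRH bound $L(\tfrac12+\epsilon+it,\chi^{*})\ll_{\epsilon}(q^{*}(|t|+1))^{\epsilon}$, yields the short-sum estimate $\big|\sum_{M<m\le M+N}\chi^{*}(m)\big|\ll_{\epsilon}N^{\frac12+\epsilon}(q^{*})^{\epsilon}$, uniformly in $M$. With $N\le Q^{1/n}$ and $q^{*}\le Q$ this is $\ll_{\epsilon}Q^{\frac{1}{2n}+\epsilon'}$, so Conjecture $C_n$ holds for all $n$ with any $\delta(n)<\tfrac12$.

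The main obstacle is, unsurprisingly, the unconditional case $n\ge 4$. Obtaining any power saving in a character sum substantially shorter than $q^{1/4}$ --- even for a quadratic $\chi$, and even over a full interval rather than a sub-progression --- is a notorious open problem, equivalent in strength to breaking the Burgess range and tied to improving bounds for the least quadratic nonresidue. Such estimates are known on average over $q$, or for all but $o(Q)$ of the moduli $q\le Q$, via the large sieve; but Conjecture $C_n$ demands the bound for \emph{every} $q\le Q$, and it is exactly this uniformity that puts it beyond current reach for $n\ge 4$ and forces Friedlander--Iwaniec--Mazur--Rubin, and the present authors, to assume it.
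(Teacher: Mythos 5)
The statement you were asked to prove is labelled \emph{Conjecture} $C_n$, and the paper does not prove it: it is an open hypothesis imported from Friedlander--Iwaniec--Mazur--Rubin and assumed throughout (Theorem~\ref{tJoint} is explicitly conditional on $C_{|S|n}$, and Corollary~\ref{cscs} is deduced \emph{from} $C_n$, not the other way round). So there is no proof in the paper to compare your attempt against, and, as you yourself conclude, none is possible with current technology. What you have written is therefore best read not as a proof but as an accurate state-of-the-art survey, and as such it is essentially correct: the reduction to primitive $\chi^{*}$ of cubefree conductor via $\chi(m)=\chi^{*}(m)\sum_{e\mid(m,\,q_1)}\mu(e)$ (with $q_1$ the product of primes dividing $q$ but not $q^{*}$; your ``$\mathrm{rad}(q/q^{*})$'' is harmless but slightly imprecise) is the right first move; the Burgess computation for $n=3$ is right, and the arithmetic $1/4+5/64=21/64<1/3$ does yield $\delta(3)=1/64$; the Perron/GRH contour-shift argument for all $n$ is standard; and the observation that for $n\geq 4$ the interval length $q^{1/n}$ drops below the Burgess threshold $q^{1/4}$, so that no unconditional power saving is known for every modulus, is precisely why $C_n$ remains a conjecture. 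The one framing issue worth flagging is that establishing $C_n$ under GRH does not establish $C_n$; the paper takes $C_n$ itself (not GRH) as the hypothesis, and your final paragraph correctly acknowledges that the unconditional case $n\geq 4$ is the genuine obstruction.
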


Instead of working directly with Conjecture $C_n$, we need a version of it for arithmetic progressions. If $q$ is odd and squarefree, we let $\chi_q$ be the real Dirichlet character $\left(\frac{\cdot}{q}\right)$.

\begin{corollary}
\label{cscs}
Assume Conjecture $C_n$. Then for all integers $n \geq 3$ there exists $\delta(n) > 0$ such that for all $\epsilon > 0$ there exists a constant $C(n, \epsilon) > 0$ with the property that for all odd squarefree integers $q > 1$, all integers $N \leq q^{\frac{1}{n}}$, all integers $M$, $l$ and $k$ with $q \nmid k$, we have
\[
\left|\sum_{\substack{M < m \leq M + N \\ n \equiv l \bmod k}} \chi_q(m) \right| \leq C(n, \epsilon) q^{\frac{1 - \delta(n)}{n}}.
\]
\end{corollary}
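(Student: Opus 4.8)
The plan is to deduce Corollary~\ref{cscs} from Conjecture~$C_n$ by a standard two-step reduction: first strip off the common divisor of the modulus $q$ and the common difference $k$, and then convert the resulting character sum over an arithmetic progression into a character sum over a short interval, to which Conjecture~$C_n$ applies directly. We may assume $k\geq 1$: the hypothesis $q\nmid k$ rules out $k=0$ (since $q\mid 0$ always), and $-k$ defines the same progression.

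First I would set $d=\gcd(k,q)$ and write $q=dq_1$. Because $q$ is odd and squarefree, $d$ and $q_1$ are coprime and squarefree; moreover every prime dividing $q_1$ divides $q$ but not $k$, so $\gcd(q_1,k)=1$, and the hypothesis $q\nmid k$ forces $q_1>1$. Hence $\chi_{q_1}=\left(\frac{\cdot}{q_1}\right)$ is again a real non-principal character, now of modulus $q_1\mid q$. Since $d\mid k$, every $m$ in the progression satisfies $m\equiv l\bmod d$, so by complete multiplicativity of the Jacobi symbol in its upper argument $\chi_q(m)=\chi_d(l)\,\chi_{q_1}(m)$, where $\chi_d(l)$ is a fixed constant of modulus at most $1$ (and the whole sum vanishes unless $\gcd(l,d)=1$). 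It therefore suffices to bound $\sum_{M<m\leq M+N,\;m\equiv l\bmod k}\chi_{q_1}(m)$.

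Next I would parametrise the progression as $m=l_0+jk$ with $j$ running over $N'\leq N/k+1$ consecutive integers, and, letting $\overline{k}$ be an inverse of $k$ modulo $q_1$ (which exists since $\gcd(k,q_1)=1$), use $\chi_{q_1}(l_0+jk)=\chi_{q_1}(k)\,\chi_{q_1}(j+\overline{k}\,l_0)$ to rewrite the sum as $\chi_{q_1}(k)$ times a sum of $\chi_{q_1}$ over $N'$ consecutive integers. Now apply Conjecture~$C_n$ with $Q:=q$ (so $q_1\leq Q$) and interval length $N'$: if $k=1$ then $N'=N\leq q^{1/n}=Q^{1/n}$; if $k\geq 2$ and $q\geq 2^n$ then $N'\leq \tfrac12 q^{1/n}+1\leq q^{1/n}=Q^{1/n}$; and the remaining pairs with $k\geq 2$, $q<2^n$ give a sum with $O_n(1)$ terms, bounded trivially. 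In every case this yields a bound $\ll_{n,\epsilon} q^{(1-\delta_0(n))/n+\epsilon}$, where $\delta_0(n)>0$ is the exponent supplied by Conjecture~$C_n$. To reach the precise form of the statement I would then set $\delta(n):=\delta_0(n)/2$ (shrinking further if needed so that $\delta(n)<1$) and invoke the previous step with the single choice $\epsilon=\delta_0(n)/(2n)$, so that $(1-\delta_0(n))/n+\epsilon=(1-\delta(n))/n$; the resulting constant then depends on $n$ alone, and in particular is of the permitted form $C(n,\epsilon)$.

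The whole argument is elementary once Conjecture~$C_n$ is granted. The only two points that require genuine (if minor) care are checking that the reduced character $\chi_{q_1}$ still meets the hypotheses of Conjecture~$C_n$ — real, non-principal, of modulus dividing $q$ — and keeping the interval length $N'$ within the constraint $N'\leq Q^{1/n}$ after the substitution $m=l_0+jk$; I expect the latter bookkeeping (together with isolating the degenerate small-$q$ and $q\mid k$-type cases) to be the only mildly delicate step.
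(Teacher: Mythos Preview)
Your argument is correct and is the standard elementary reduction one would expect here. The paper itself does not give a proof at all: it simply asserts that the statement ``is an easy generalization of Corollary~7 in \cite{KM1}'' and leaves it at that. What you have written is presumably exactly the argument behind that citation---factor out the part of $q$ dividing $k$ via multiplicativity of the Jacobi symbol, then turn the progression into a short interval by the substitution $m=l_0+jk$ and the twist $\chi_{q_1}(l_0+jk)=\chi_{q_1}(k)\chi_{q_1}(j+\overline{k}l_0)$---so there is no genuine methodological difference to report. Your handling of the edge cases ($k=0$, small $q$, and the absorption of the $\epsilon$ from Conjecture~$C_n$ into a halved $\delta$) is also clean; the last point in particular is worth making explicit, since as stated the corollary's bound carries no $\epsilon$ in the exponent even though the constant is allowed to depend on one.
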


\begin{proof}
This is an easy generalization of Corollary 7 in \cite{KM1}.
\end{proof}

\subsection{The sieve}
We will prove the following oscillation results for the sequence $\{ s_{\aaa} \}_{\aaa}$. First, for any non-zero ideal $\mm\subset \OO_K$ and any $\epsilon > 0$, we have 
\begin{equation}\label{typeI}
\sum_{\substack{\Norm(\aaa)\leq X\\ \aaa\equiv 0\bmod \mm}}s_{\aaa} \ll_{\epsilon} X^{1-\frac{\delta}{54n|S|^2} + \epsilon},
\end{equation}
where $\delta$ is as in Conjecture $C_n$. Second, for any $\epsilon>0$, we have
\begin{equation}\label{typeII}
\sum_{\Norm(\aaa)\leq x}\sum_{\Norm(\bb)\leq y}v_{\aaa}w_{\bb}s_{\aaa\bb} \ll_{\epsilon} \left(x^{-\frac{1}{6n}}+y^{-\frac{1}{6n}}\right)\left(xy\right)^{1+\epsilon},
\end{equation}
for any pair of bounded sequences of complex numbers $\{v_{\mm}\}$ and $\{w_{\nn}\}$ indexed by non-zero ideals in $\OO_K$. Then \cite[Proposition 5.2, p. 722]{FIMR} implies that for any $\epsilon>0$, we have
$$
\sum_{\substack{\Norm(\pp)\leq X\\ \pp\text{ prime ideal}}}s_{\pp} \ll_{\epsilon} X^{1-\theta + \epsilon},
$$
where
$$
\theta := \frac{\delta(|S|n)}{54|S|^2n(12n+1)}.
$$
Hence, in order to prove Theorem \ref{tJoint}, it suffices to prove the estimates (\ref{typeI}) and (\ref{typeII}). We will deal with (\ref{typeI}) in Section \ref{sLinear} and with (\ref{typeII}) in Section \ref{sBilinear}.

\section{Linear sums}
\label{sLinear}
We first treat the case that $K$ is totally real. Let $\mathfrak{m}$ be an ideal coprime with $F$ and $\sigma(\mathfrak{m})$ for all $\sigma \in S$. Following \cite{FIMR} we will bound
\begin{align}
\label{eFirstSum}
A(x) = \sum_{\substack{\Norm{\mathfrak{a}} \leq x \\ (\mathfrak{a}, F) = 1, \mathfrak{m} \mid \mathfrak{a}}} r(\mathfrak{a}) \psi(\alpha \bmod F) \prod_{\sigma \in S} \spin(\sigma, \alpha),
\end{align}
where $\alpha$ is any totally positive generator of $\mathfrak{a}$. We pick for each ideal $\mathfrak{a}$ with $r(\mathfrak{a}) = 1$ its unique generator $\alpha$ satisfying $\mathfrak{a} = (\alpha)$ and $\alpha \in \DD^\ast$, where $\DD^\ast$ is the fundamental domain from Friedlander et al. \cite{FIMR}. After splitting (\ref{eFirstSum}) in residue classes modulo $F$ we obtain
\begin{align*}
A(x) = \sum_{\substack{\rho \bmod F \\ (\rho, F) = 1}} \psi(\rho) A(x; \rho) + \partial A(x),
\end{align*}
where by definition
\begin{align}
\label{eArho}
A(x; \rho) := \sum_{\substack{\alpha \in \DD, \Norm{\alpha} \leq x \\ \alpha \equiv \rho \bmod F \\ \alpha \equiv 0 \bmod \mathfrak{m}}} \prod_{\sigma \in S} \spin(\sigma, \alpha).
\end{align}
The boundary term $\partial A(x)$ can be dealt with using the argument in \cite[p.\ 724]{FIMR}, which gives $\partial A(x) \ll x^{1 - \frac{1}{n}}$. Here and in the rest of our arguments the implied constant depends only on $K$ unless otherwise indicated. We will now estimate $A(x; \rho)$ for each $\rho \bmod F$, $(\rho, F) = 1$. Let $1, \omega_2, \ldots, \omega_n$ be an integral basis for $\OO_K$ and define
\begin{align*}
\MM := \omega_2 \Z + \cdots + \omega_n \Z.
\end{align*}
Then, just as in \cite[p.\ 725]{FIMR}, we can decompose $\alpha$ uniquely as 
\begin{align*}
\alpha = a + \beta, \quad \quad \text{with } a \in \Z, \beta \in \MM.
\end{align*}
Hence the summation conditions in (\ref{eArho}) can be rewritten as
\begin{equation}\tag{$\ast$}
a + \beta \in \DD, \quad \Norm{(a + \beta)} \leq x, \quad a + \beta \equiv \rho \bmod F, \quad a + \beta \equiv 0 \bmod \mathfrak{m}.
\end{equation}
From now on we think of $a$ as a variable satisfying ($\ast$) while $\beta$ is inactive. We have the following formula
\begin{align*}
\spin(\sigma, \alpha) = \left(\frac{\alpha}{\sigma(\alpha)}\right) = \left(\frac{a + \beta}{a + \sigma(\beta)}\right) = \left(\frac{\beta - \sigma(\beta)}{a + \sigma(\beta)}\right).
\end{align*}
If $\beta = \sigma(\beta)$ for some $\sigma \in S$ we get no contribution. So from now on we can assume $\beta \neq \sigma(\beta)$ for all $\sigma \in S$. Define $\ccc(\sigma, \beta)$ to be the part of the ideal $(\beta - \sigma(\beta))$ coprime to $F$. Then, as explained on \cite[p.\ 726]{FIMR}, quadratic reciprocity gives
\begin{align*}
A(x; \rho) = \sum_{\beta \in \MM} \pm T(x; \rho, \beta),
\end{align*}
where $T(x; \rho, \beta)$ is given by
\begin{align}
T(x; \rho, \beta) &:= \sum_{\substack{a \in \Z \\ a + \beta \text{ sat. }(\ast)}} \prod_{\sigma \in S} \left(\frac{a + \sigma(\beta)}{\ccc(\sigma, \beta)}\right) = \sum_{\substack{a \in \Z \\ a + \beta \text{ sat. }(\ast)}} \prod_{\sigma \in S} \left(\frac{a + \beta}{\ccc(\sigma, \beta)}\right) \nonumber \\
\label{eT}
&= \sum_{\substack{a \in \Z \\ a + \beta \text{ sat. }(\ast)}} \left(\frac{a + \beta}{\prod_{\sigma \in S} \ccc(\sigma, \beta)}\right).
\end{align}
Define $\ccc := \prod_{\sigma \in S} \ccc(\sigma, \beta)$ and factor $\ccc$ as
\begin{align}
\label{eFactorc}
\ccc = \mathfrak{g} \qq,
\end{align}
where by definition $\mathfrak{g}$ consists of those prime ideals $\pp$ dividing $\ccc$ that satisfy one of the following three properties
\begin{itemize}
\item $\pp$ has degree greater than one;
\item $\pp$ is unramified of degree one and some non-trivial conjugate of $\pp$ also divides $\ccc$;
\item $\pp$ is unramified of degree one and $\pp^2$ divides $\ccc$.
\end{itemize}
Note that there are no ramified primes dividing $\ccc$, since $\ccc$ is coprime to the discriminant by construction of $F$. Putting all the remaining prime ideals in $\qq$, we note that $q := \Norm{\mathfrak{q}}$ is a squarefree number and $g := \Norm{\mathfrak{g}}$ is a squarefull number coprime with $q$. The Chinese Remainder Theorem implies that there exists a rational integer $b$ with $b \equiv \beta \bmod \mathfrak{q}$. We stress that $\ccc$, $\mathfrak{g}$, $\qq$, $g$, $q$ and $b$ depend only on $\beta$. Define $g_0$ to be the radical of $g$. Then the quadratic residue symbol $(\alpha/\mathfrak{g})$ is periodic in $\alpha$ modulo $g_0$. Hence the symbol $((a + \beta)/\mathfrak{g})$ as a function of $a$ is periodic of period $g_0$. Splitting the sum (\ref{eT}) in residue classes modulo $g_0$ we obtain
\begin{align}
\label{eT2}
|T(x; \rho, \beta)| \leq \sum_{a_0 \bmod g_0} \left| \sum_{\substack{a \equiv a_0 \bmod g_0 \\ a + \beta \text{ sat. }(\ast)}} \left(\frac{a + b}{\qq}\right) \right|.
\end{align}
Following the argument on \cite[p.\ 728]{FIMR}, we see that (\ref{eT2}) can be written as $n$ incomplete character sums of length $\ll x^{\frac{1}{n}}$ and modulus $q \ll x^{|S|}$. Furthermore, the conditions ($\ast$) and $a \equiv a_0 \bmod g_0$ imply that $a$ runs over a certain arithmetic progression of modulus $k$ dividing $g_0Fm$, where $m := \Norm{\mm}$. So if $q \nmid k$, Corollary \ref{cscs} yields
\begin{align}
\label{eFirstBound}
T(x; \rho, \beta) \ll_\epsilon g_0 x^{\frac{1 - \delta}{n} + \epsilon}
\end{align}
with $\delta := \delta(|S|n) > 0$. Since $q \mid k$ implies $q \mid m$, we see that (\ref{eFirstBound}) holds if $q \nmid m$. Recalling (\ref{eFactorc}) we conclude that (\ref{eFirstBound}) holds unless
\begin{align}
\label{eException}
p \mid \prod_{\sigma \in S} \Norm{(\beta - \sigma(\beta))} \Rightarrow p^2 \mid mF\prod_{\sigma \in S} \Norm{(\beta - \sigma(\beta))}.
\end{align}
Our next goal is to count the number of $\beta \in \MM$ satisfying both $(\ast)$ for some $a \in \Z$ and (\ref{eException}). For $\beta$ an algebraic integer of degree $n$, we denote by $\beta^{(1)}, \ldots, \beta^{(n)}$ the conjugates of $\beta$. Now if $\beta$ satisfies $(\ast)$ for some $a \in \Z$, we have $|\beta^{(i)}| \ll x^{\frac{1}{n}}$. So to achieve our goal, it suffices to estimate the number of $\beta \in \MM$ satisfying $|\beta^{(i)}| \leq x^{\frac{1}{n}}$ and (\ref{eException}). 

To do this, we will need two lemmas. So far we have followed \cite{FIMR} rather closely, but we will have to significantly improve their estimates for the various error terms given on \cite[p.\ 729-733]{FIMR}. One of the most important tasks ahead is to count squarefull norms in a certain $\Z$-submodule of $\OO_K$. This problem is solved in \cite{FIMR} by simply counting squarefull norms in the full ring of integers. For our application this loss is unacceptable. In our first lemma we directly count squarefull norms in this submodule, a problem described in \cite[p.\ 729]{FIMR} as potentially ``very difficult''.

\begin{lemma}
\label{lG}
Factor $\ccc(\sigma, \beta)$ as
\begin{align*}
\ccc(\sigma, \beta) = \mathfrak{g}(\sigma, \beta) \qq(\sigma, \beta)
\end{align*}
just as in (\ref{eFactorc}). Let $K^{\sigma}$ be the subfield of $K$ fixed by $\sigma$ and let $\OO_{K^\sigma}$ be its ring of integers. Decompose $\OO_K$ as
\begin{align*}
\OO_K = \OO_{K^\sigma} \oplus \MM'.
\end{align*}
Let $\text{ord}(\sigma)$ be the order of $\sigma$ in $\Gal(K/\Q)$. If $g_0(\sigma, \beta)$ is the radical of $\Norm{\mathfrak{g}(\sigma, \beta)}$, then we have for all $\epsilon > 0$
\begin{align*}
|\{\beta \in \MM': |\beta^{(i)}| \leq x^{\frac{1}{n}}, g_0(\sigma, \beta) > Z\}| \ll_\epsilon x^{1 - \frac{1}{\text{ord}(\sigma)} + \epsilon} Z^{-1 + \frac{2}{\text{ord}(\sigma)}}.
\end{align*}
\end{lemma}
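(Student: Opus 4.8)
The plan is to reparametrise the count by $\gamma:=\beta-\sigma(\beta)$ and then bound the number of vectors in a suitable lattice that are divisible by an ideal of squarefull norm, exploiting that any such $\gamma$ is forced to be large.

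First I would reduce to an ideal-divisibility condition. The ideal $\mathfrak{g}(\sigma,\beta)$ divides $(\beta-\sigma(\beta))$ by construction, and $\Norm(\mathfrak{g}(\sigma,\beta))$ is squarefull: a prime $\pp\mid\mathfrak{g}(\sigma,\beta)$ of degree $f\ge 2$ contributes $p^{f}$ to the norm; if $\pp$ is unramified of degree one with $\pp^{2}\mid\ccc(\sigma,\beta)$ then $v_{\pp}(\mathfrak{g}(\sigma,\beta))\ge 2$; and if $\pp$ is unramified of degree one with a nontrivial conjugate $\pp'$ dividing $\ccc(\sigma,\beta)$, then $\pp'$ divides $\mathfrak{g}(\sigma,\beta)$ by the same criterion, so $p^{2}$ again divides the norm. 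Hence $\Norm(\mathfrak{g}(\sigma,\beta))\ge g_0(\sigma,\beta)^{2}>Z^{2}$, while $\Norm(\mathfrak{g}(\sigma,\beta))\mid\Norm_{K/\Q}(\beta-\sigma(\beta))\ll x$ (using $|\beta^{(i)}|\le x^{1/n}$); in particular the left-hand side of the lemma vanishes unless $Z\ll x^{1/2}$, which we now assume. It therefore suffices to bound $\sum_{\dd}\#\{\beta\in\MM'\colon |\beta^{(i)}|\le x^{1/n}\ \text{for all }i,\ \dd\mid(\beta-\sigma(\beta))\}$, where $\dd$ runs over the ideals of $\OO_{K}$ with $\Norm(\dd)$ squarefull and $Z^{2}<\Norm(\dd)\ll x$, because every admissible $\beta$ is counted by the term $\dd=\mathfrak{g}(\sigma,\beta)$.

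Next I would reparametrise. The map $\beta\mapsto\gamma:=\beta-\sigma(\beta)$ is injective on $\MM'$ (its kernel on $\OO_{K}$ is $\OO_{K^{\sigma}}$, which meets $\MM'$ trivially), with image the lattice $\Lambda:=(1-\sigma)\OO_{K}$ of rank $r:=n-[K^{\sigma}:\Q]=n-n/\text{ord}(\sigma)$. Since $\sigma$ permutes the archimedean places, $|\beta^{(i)}|\le x^{1/n}$ for all $i$ implies $|\gamma^{(i)}|\le 2x^{1/n}$ for all $i$, so each inner count above is at most $\#\{\gamma\in\Lambda\cap\dd\colon |\gamma^{(i)}|\le 2x^{1/n}\ \text{for all }i\}$. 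The decisive point is that a nonzero $\gamma\in\dd$ cannot be small: $\Norm(\dd)\le|\Norm_{K/\Q}(\gamma)|=\prod_{i}|\gamma^{(i)}|\le\bigl(\max_{i}|\gamma^{(i)}|\bigr)^{n}$, so $\max_{i}|\gamma^{(i)}|\ge\Norm(\dd)^{1/n}$. The lattice $\Lambda\cap\dd$ has full rank $r$ (it contains $\Norm(\dd)\Lambda$) and its first successive minimum, for the sup-norm on the conjugates, is $\ge\Norm(\dd)^{1/n}$; a standard lattice-point estimate in terms of successive minima then gives
\[
\#\bigl\{\gamma\in\Lambda\cap\dd\colon |\gamma^{(i)}|\le 2x^{1/n}\ \text{for all }i\bigr\}\ \ll\ \Bigl(1+\frac{x^{1/n}}{\Norm(\dd)^{1/n}}\Bigr)^{r}\ \ll\ 1+\frac{x^{r/n}}{\Norm(\dd)^{r/n}}.
\]

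Finally I would sum over $\dd$. Using $r/n=1-1/\text{ord}(\sigma)$, the bound $\#\{\dd\colon\Norm(\dd)=m\}\ll_{\epsilon}m^{\epsilon}$, and the fact that the squarefull integers up to $t$ number $\asymp t^{1/2}$, partial summation gives
\begin{align*}
\sum_{\dd}\Bigl(1+\frac{x^{r/n}}{\Norm(\dd)^{r/n}}\Bigr)\ &\ll_{\epsilon}\ x^{\frac12+\epsilon}+x^{1-\frac1{\text{ord}(\sigma)}}\sum_{\substack{m>Z^{2}\\ m\ \text{squarefull}}}m^{-\bigl(1-\frac1{\text{ord}(\sigma)}\bigr)+\epsilon}\\
&\ll_{\epsilon}\ x^{\frac12+\epsilon}+x^{1-\frac1{\text{ord}(\sigma)}}Z^{-1+\frac2{\text{ord}(\sigma)}+\epsilon},
\end{align*}
the sum over $m$ converging exactly because $\text{ord}(\sigma)\ge 3$ (when $\text{ord}(\sigma)=2$ the claimed bound is the trivial $\ll x^{1/2+\epsilon}$, as there are only $\ll x^{r/n}=x^{1/2}$ admissible $\beta$). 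Since $Z\ll x^{1/2}$, the term $x^{1/2+\epsilon}$ is absorbed into $x^{1-1/\text{ord}(\sigma)}Z^{-1+2/\text{ord}(\sigma)+\epsilon}$, and the lemma follows. The only genuinely new ingredient is this reparametrisation together with the bound $\max_{i}|\gamma^{(i)}|\ge\Norm(\dd)^{1/n}$: passing from $\beta$ to $\gamma$ replaces $\OO_{K}$ (of rank $n$, as used in \cite{FIMR}) by the smaller lattice $\Lambda$ of rank $r=n-n/\text{ord}(\sigma)$, and the divisibility $\dd\mid\gamma$ automatically keeps $\gamma$ away from the origin, so $\Lambda\cap\dd$ has no short vectors and the boundary terms that make this count ``very difficult'' in \cite{FIMR} disappear; everything else is geometry of numbers and elementary estimates.
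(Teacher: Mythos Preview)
Your proposal is correct and follows essentially the same approach as the paper: reparametrise by $\gamma=\beta-\sigma(\beta)$ to land in a rank-$r$ lattice with $r=n(1-1/\mathrm{ord}(\sigma))$, use that $\mathfrak{g}\mid\gamma$ forces the first successive minimum to be $\gg \Norm(\mathfrak{g})^{1/n}$, count lattice points, and sum over squarefull norms. The only difference is cosmetic: the paper invokes Widmer's lattice-point theorem together with Minkowski's second theorem to get $|\Lambda_{\mathfrak{g}}\cap T_x|\ll x^{r/n}/g^{r/n}$, whereas you obtain the same bound from the elementary packing estimate $\#(\Lambda\cap B_R)\le (1+2R/\lambda_1)^r$.
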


\begin{proof}
The argument given here is a generalization of \cite[p.\ 17-18]{KM1}. We start with the simple estimate
\begin{align}
\label{eA}
|\{\beta \in \MM': |\beta^{(i)}| \leq x^{\frac{1}{n}}, g_0(\sigma, \beta) > Z\}| \leq \sum_{\substack{\mathfrak{g} \\ g_0 > Z}} A_\mathfrak{g},
\end{align}
where
\begin{align*}
A_\mathfrak{g} := |\{\beta \in \MM' : |\beta^{(i)}| \leq x^{\frac{1}{n}}, \beta - \sigma(\beta) \equiv 0 \bmod \mathfrak{g}\}|.
\end{align*}
Let $\MM''$ be the image of $\MM'$ under the map $\beta \mapsto \beta - \sigma(\beta)$ and fix a $\Z$-basis $\eta_1, \ldots, \eta_r$ of $\MM''$. We remark that $r = n\left(1 - \frac{1}{\text{ord}(\sigma)}\right)$, which will be important later on. Because $|\beta^{(i)}| \leq x^{\frac{1}{n}}$, we can write $\beta - \sigma(\beta)$ as $\beta - \sigma(\beta) = \sum_{i = 1}^r a_i \eta_i$ with $|a_i| \leq C_K x^{\frac{1}{n}}$, where $C_K$ is a constant depending only on $K$. Hence we have 
\begin{align*}
A_\mathfrak{g} \leq |\Lambda_\mathfrak{g} \cap S_x|,
\end{align*}
where by definition
\begin{align*}
\Lambda_\mathfrak{g} &:= \{\gamma \in \MM'' : \gamma \equiv 0 \bmod \mathfrak{g}\} \\
S_x &:= \{\gamma \in \MM'' : \gamma = \sum_{i = 1}^r a_i \eta_i, |a_i| \leq C_K x^{\frac{1}{n}}\}.
\end{align*}
Using our fixed $\Z$-basis $\eta_1, \ldots, \eta_r$ we can view $\MM''$ as a subset of $\mathbb{R}^r$ via the map $\eta_i \mapsto e_i$, where $e_i$ is the $i$-th standard basis vector. Under this identification $\MM''$ becomes $\Z^r$ and $\Lambda_\mathfrak{g}$ becomes a sublattice of $\Z^r$. We have
\begin{align}
\label{eAg}
A_\mathfrak{g} \leq |\Lambda_\mathfrak{g} \cap T_x|,
\end{align}
where
\begin{align*}
T_x &:= \{(a_1, \ldots, a_r) \in \mathbb{R}^r : |a_i| \leq C_K x^{\frac{1}{n}}\}.
\end{align*}
Let us now parametrize the boundary of $T_x$. We start off by observing that $T_x = x^{\frac{1}{n}}T_1$, which implies that $\Vol(T_x) = x^\frac{r}{n} \Vol(T_1)$. Because $T_1$ is an $r$-dimensional hypercube, we conclude that its boundary $\partial T_1$ can be parametrized by Lipschitz functions with Lipschitz constant $L$ depending only on $K$. Therefore $\partial T_x$ can also be parametrized by Lipschitz functions with Lipschitz constant $x^{\frac{1}{n}}L$. Theorem 5.4 of \cite{Widmer} gives
\begin{align}
\label{eLattice}
\left||\Lambda_\mathfrak{g} \cap T_x| - \frac{\Vol(T_x)}{\det \Lambda_\mathfrak{g}}\right| \ll_L \max_{0 \leq i < r} \frac{x^{\frac{i}{n}}}{\lambda_{\mathfrak{g}, 1} \cdot \ldots \cdot \lambda_{\mathfrak{g}, i}},
\end{align}
where $\lambda_{\mathfrak{g}, 1}, \ldots, \lambda_{\mathfrak{g}, r}$ are the successive minima of $\Lambda_\mathfrak{g}$. Since $L$ depends only on $K$, it follows that the implied constant in (\ref{eLattice}) depends only on $K$, so we may simply write $\ll$ by our earlier conventions.

Our next goal is to give a lower bound for $\lambda_{\mathfrak{g}, 1}$. So let $\gamma \in \Lambda_\mathfrak{g}$ be non-zero. By definition of $\Lambda_\mathfrak{g}$ we have $\mathfrak{g} \mid \gamma$ and hence $g \mid \Norm{\gamma}$. Write
\begin{align*}
\gamma = \sum_{i = 1}^r a_i \eta_i.
\end{align*}
If $a_1, \ldots, a_r \leq C'_K g^{\frac{1}{n}}$ for a sufficiently small constant $C'_K$, we find that $\Norm{\gamma} < g$. But this is impossible, since $g \mid \Norm{\gamma}$ and $\Norm{\gamma} \neq 0$. So there is an $i$ with $a_i > C'_K g^{\frac{1}{n}}$. If we equip $\mathbb{R}^r$ with the standard Euclidean norm, we conclude that the length of $\gamma$ satisfies $||\gamma|| \gg g^{\frac{1}{n}}$ and hence
\begin{align}
\label{eLL}
\lambda_{\mathfrak{g}, 1} \gg g^{\frac{1}{n}}.
\end{align}
Minkowski's second theorem and (\ref{eLL}) imply that
\begin{align}
\label{eLD}
\det \Lambda_\mathfrak{g} \gg g^{\frac{r}{n}}.
\end{align}
Combining (\ref{eLattice}), (\ref{eLL}), (\ref{eLD}) and $g \leq x$ gives
\begin{align}
\label{eI}
|\Lambda_\mathfrak{g} \cap T_x| \ll \frac{x^{\frac{r}{n}}}{g^{\frac{r}{n}}} + \frac{x^{\frac{r - 1}{n}}}{g^{\frac{r - 1}{n}}} \ll \frac{x^{\frac{r}{n}}}{g^{\frac{r}{n}}}.
\end{align}
Plugging (\ref{eAg}) and (\ref{eI}) back in (\ref{eA}) yields
\begin{align*}
|\{\beta \in \MM': |\beta^{(i)}| \leq x^{\frac{1}{n}}, g_0(\sigma, \beta) > Z\}| \leq \sum_{\substack{\mathfrak{g} \\ g_0 > Z}} A_\mathfrak{g} \leq \sum_{\substack{\mathfrak{g} \\ g_0 > Z}} |\Lambda_\mathfrak{g} \cap T_x| \ll \sum_{\substack{\mathfrak{g} \\ g_0 > Z}} \frac{x^{\frac{r}{n}}}{g^{\frac{r}{n}}}.
\end{align*}
If we define $\tau_K(g)$ to be the number of ideals of $K$ of norm $g$, we can bound the last sum as follows
\begin{align*}
\sum_{\substack{\mathfrak{g} \\ g_0 > Z}} \frac{x^{\frac{r}{n}}}{g^{\frac{r}{n}}} 
&= x^{\frac{r}{n}} \sum_{\substack{g \leq x \\ g \text{ squarefull} \\ g_0 > Z}} \frac{\tau_K(g)}{g^{\frac{r}{n}}}
\ll_\epsilon x^{\frac{r}{n} + \epsilon} \sum_{\substack{g \leq x \\ g \text{ squarefull} \\ g_0 > Z}} \frac{1}{g^{\frac{r}{n}}} \\
&= x^{\frac{r}{n} + \epsilon} \sum_{\substack{g \leq x \\ g \text{ squarefull} \\ g_0 > Z}} g^{\frac{1}{2} - \frac{r}{n}} \frac{1}{g^{\frac{1}{2}}}
\leq x^{\frac{r}{n} + \epsilon} Z^{1 - \frac{2r}{n}} \sum_{\substack{g \leq x \\ g \text{ squarefull} \\ g_0 > Z}} \frac{1}{g^{\frac{1}{2}}} \\
&\leq x^{\frac{r}{n} + \epsilon} Z^{1 - \frac{2r}{n}} \sum_{\substack{g \leq x \\ g \text{ squarefull}}} \frac{1}{g^{\frac{1}{2}}} \ll_\epsilon x^{\frac{r}{n} + \epsilon} Z^{1 - \frac{2r}{n}}.
\end{align*}
Recalling that $r = n\left(1 - \frac{1}{\text{ord}(\sigma)}\right)$ completes the proof of Lemma \ref{lG}.
\end{proof}

\begin{lemma}
\label{lGCD}
Let $\sigma, \tau \in S$ be distinct. Recall that
\begin{align*}
\OO_K = \Z \oplus \MM.
\end{align*}
Fix an integral basis $\omega_2, \ldots, \omega_n$ of $\MM$ and define the polynomials $f_1, f_2 \in \Z[x_2, \ldots, x_n]$ by
\begin{align*}
f_1(x_2, \ldots, x_n) &= \Norm{\left(\sum_{i = 2}^n x_i (\sigma(\omega_i) - \omega_i)\right)} \\
f_2(x_2, \ldots, x_n) &= \Norm{\left(\sum_{i = 2}^n x_i (\tau(\omega_i) - \omega_i)\right)}.
\end{align*}
For $\beta \in \MM$ with $\beta = \sum_{i = 2}^n a_i \omega_i$ we define $f_1(\beta) := f_1(a_2, \ldots, a_n) = \Norm{(\sigma(\beta) - \beta)}$ and similarly for $f_2(\beta)$. Then
\begin{align*}
|\{\beta \in \MM: |\beta^{(i)}| \leq x^{\frac{1}{n}}, \gcd(f_1(\beta), f_2(\beta)) > Z\}| \ll_\epsilon x^{\frac{n - 1}{n} + \epsilon} Z^{-\frac{1}{18}} + x^{\frac{n - 2}{n}} + Z^{\frac{2n - 4}{3}}.
\end{align*}
\end{lemma}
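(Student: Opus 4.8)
The plan is to recast the statement as a lattice-point count and to exploit that $f_1$ and $f_2$ share no common polynomial factor; since the claimed bound already exceeds the trivial bound $\ll_K x^{(n-1)/n}$ once $Z$ is bounded, I may assume $Z$ is as large as convenient in terms of $K$. Writing $\beta=\sum_{i=2}^n a_i\omega_i$, the hypothesis $|\beta^{(i)}|\leq x^{1/n}$ forces $|a_i|\ll x^{1/n}$, so, after dividing each $f_j$ by its content (which changes $Z$ only by a bounded factor), I must count integer points in a box of side $B\asymp x^{1/n}$ in $\mathbb{A}^{n-1}$ at which $\gcd(f_1(\underline a),f_2(\underline a))>Z$; the totally complex case is identical once $\DD$ is replaced by the box-type domain of Section~\ref{fundDom}. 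The crucial algebraic fact is that $f_1$ and $f_2$ are coprime in $\Q[x_2,\dots,x_n]$: over $\overline{\Q}$ each $f_j$ (with $\sigma_1:=\sigma$, $\sigma_2:=\tau$) factors into the linear forms $\underline a\mapsto\iota_k(\sigma_j(\beta))-\iota_k(\beta)$ as $\iota_k$ runs over the $n$ embeddings of $K$, and two such forms are proportional only when they involve the same unordered pair of embeddings (linear independence of characters); since $\sigma,\tau$ are non-trivial they permute the embeddings without fixed points, and the hypotheses $\sigma\neq\tau$ together with $\sigma\tau\neq1$ — the latter forced by $\sigma,\tau\in S$ and the defining property of $S$ — exclude any common pair, whence $\gcd(f_1,f_2)=1$. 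Two consequences will be used: the resultant $R:=\mathrm{Res}_{x_2}(f_1,f_2)\in\Z[x_3,\dots,x_n]$ is a nonzero polynomial of degree $O_n(1)$ with $\gcd(f_1(\underline a),f_2(\underline a))\mid R(a_3,\dots,a_n)$ whenever $R(a_3,\dots,a_n)\neq0$; and, away from $O_K(1)$ bad primes, $\{f_1=f_2=0\}$ is a codimension-$2$ complete intersection with $\ll_n p^{n-3}$ points over $\FF_p$.

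I would then set $m:=\gcd(f_1(\beta),f_2(\beta))$ and split according to its shape: (i) $m$ has a prime factor exceeding a threshold $Z^{c_1}$; (ii) $m$ is $Z^{c_1}$-smooth with radical exceeding $Z^{c_2}$; (iii) $m$ has radical at most $Z^{c_2}$, so that its powerful part is $>Z^{1-c_2}$. In case (i), $\beta\bmod p$ lies on $\{f_1=f_2=0\}(\FF_p)$ for some prime $p>Z^{c_1}$, and I run the geometric sieve in the spirit of \cite{BhargavaThegeometricsieve}: the bound $\ll_n p^{n-3}$ for $Z^{c_1}<p\leq B$ sums to a power saving in $Z$ over the trivial bound, while for $p>B$ such a prime must divide the bounded nonzero integer $R(a_3,\dots,a_n)$, leaving only $O_n(1)$ candidate primes per slice $(a_3,\dots,a_n)$ and $O_n(1)$ admissible $a_2$ per candidate, with the slices where $R$ vanishes confined to a hypersurface and contributing $\ll_n x^{(n-2)/n}$. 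In case (ii), $m$ — being $Z^{c_1}$-smooth with large radical — has a squarefree divisor $e$ in a window $(Z^{c_3},Z^{c_2}]$ dividing both $f_1(\beta)$ and $f_2(\beta)$; by the Chinese Remainder Theorem and the codimension-$2$ estimate the number of such $\beta$ in the box is $\ll_\epsilon e^{n-3+\epsilon}(B/e+1)^{n-1}$, and summing over $e$ in the window keeps this within budget.

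Case (iii) is handled via Lemma~\ref{lG}. Writing $w=\prod_{p^2\mid m}p^{v_p(m)}$, any $p\mid w$ with $p\nmid F$ satisfies $p^2\mid f_1(\beta)$ and $p^2\mid f_2(\beta)$, hence — using that $\Norm(\qq(\sigma,\beta))$ is squarefree — divides $\Norm(\mathfrak{g}(\sigma,\beta))$ with $v_p\!\left(\Norm(\mathfrak{g}(\sigma,\beta))\right)\geq\tfrac12 v_p(w)$, and likewise for $\tau$; thus the non-$F$ part of $w$ is dominated by $\Norm(\mathfrak{g}(\sigma,\beta))$ and $\Norm(\mathfrak{g}(\tau,\beta))$. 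One then concludes by invoking the lattice-point estimate of Lemma~\ref{lG} (after fixing the $\OO_{K^\sigma}$-component of $\beta$, and exploiting $\sigma$ and $\tau$ jointly in the borderline case $\ord(\sigma)=\ord(\tau)=2$ where Lemma~\ref{lG} alone saves nothing), together with a direct count of the rare $\beta$ whose $F$-primary part is large, which is controlled since ideals dividing $F^\infty$ of large norm are sparse and lie in sublattices of large index.

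The delicate part is case (i): one must run the geometric sieve so that primes exceeding the side of the box are controlled purely through the resultant, while simultaneously keeping the exceptional contributions — bad reduction of $\{f_1=f_2=0\}$, the zero locus of $R$, inseparable one-variable specializations of $f_1$ and $f_2$, and the $F$-primary parts arising in case (iii) — all below $x^{(n-2)/n}$. Carrying this out while choosing the thresholds $c_1,c_2,c_3$ so that every resulting term is at most $x^{(n-1)/n+\epsilon}Z^{-1/18}+x^{(n-2)/n}+Z^{(2n-4)/3}$ is where the bulk of the effort lies; the somewhat unusual exponents $\tfrac1{18}$ and $\tfrac{2n-4}3$ are precisely the outcome of this optimization.
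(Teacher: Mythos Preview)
Your proposal is essentially correct and assembles the same toolkit as the paper: coprimality of $f_1,f_2$ via linear independence of characters (the paper phrases this as the Artin--Dedekind Lemma), Bhargava's geometric sieve for large prime factors of the gcd, the $p^{n-3}$ count for simultaneous congruences, and Lemma~\ref{lG} for the powerful contribution. The paper organizes the three-way split differently: rather than decomposing by the shape of $m=\gcd(f_1(\beta),f_2(\beta))$, it discards (a) $\beta$ with a prime $p>M$ dividing $m$ via Bhargava's Theorem~3.3, then (b) $\beta$ for which the squarefull part $g_j$ of $f_j(\beta)$ \emph{individually} exceeds $A$ via (a variant of) Lemma~\ref{lG}, and observes that the survivors have $\gcd(q_1,q_2)>Z/A^2$ with all prime divisors $\le M$, producing a squarefree divisor in the window $(Z/A^2,\,ZM/A^2]$; the choice $A=M=Z^{1/3}$ then yields the stated exponents. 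This is marginally cleaner than your case~(iii), since applying Lemma~\ref{lG} to the squarefull part of each $f_j$ directly avoids having to transfer the powerful part of $m$ back to $\Norm(\mathfrak{g}(\sigma,\beta))$. Your explicit resultant argument for primes $p>B$ is precisely the content of Bhargava's theorem, which the paper simply cites.

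One small correction: your concern about the ``borderline case $\ord(\sigma)=\ord(\tau)=2$'' is vacuous. The defining property of $S$ --- that $\sigma\in S$ implies $\sigma^{-1}\notin S$ --- forces $\sigma\ne\sigma^{-1}$, so every $\sigma\in S$ has order at least $3$; the paper uses exactly this at the end of the proof when choosing $A=M=Z^{1/3}$.
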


\begin{proof}
Let $Y$ be the closed subscheme of $\mathbb{A}_\Z^{n - 1}$ defined by $f_1 = f_2 = 0$. We claim that $Y$ has codimension $2$, i.e. $f_1$ and $f_2$ are relatively prime polynomials. Suppose not. Note that $f_1$ and $f_2$ factor in $K[x_2, \ldots, x_n]$ as
\begin{align*}
f_1(x_2, \ldots, x_n) &= \prod_{\sigma' \in \Gal(K/\Q)} \left(\sum_{i = 2}^n x_i (\sigma' \sigma(\omega_i) - \sigma'(\omega_i))\right) \\
f_2(x_2, \ldots, x_n) &= \prod_{\tau' \in \Gal(K/\Q)} \left(\sum_{i = 2}^n x_i (\tau' \tau(\omega_i) - \tau'(\omega_i))\right).
\end{align*}
Hence if $f_1$ and $f_2$ are not relatively prime, there are $\sigma', \tau' \in \Gal(K/\Q)$ and $\kappa \in K^\ast$ such that
\begin{align*}
\sum_{i = 2}^n x_i (\sigma' \sigma(\omega_i) - \sigma'(\omega_i)) = \kappa \sum_{i = 2}^n x_i (\tau' \tau(\omega_i) - \tau'(\omega_i))
\end{align*}
for all $x_2, \ldots, x_n \in \Z$. Put $\beta = \sum_{i = 2}^n x_i \omega_i$. Then we can rewrite this as
\begin{align}
\label{eAD}
\sigma' \sigma(\beta) - \sigma'(\beta) = \kappa(\tau' \tau(\beta) - \tau'(\beta))
\end{align}
for all $\beta \in \MM$. But this implies that (\ref{eAD}) holds for all $\beta \in K$. Now we apply the Artin-Dedekind Lemma, which gives a contradiction in all cases due to our assumptions $\sigma, \tau \in S$ and $\sigma \neq \tau$.

Having established our claim, we are in position to apply Theorem 3.3 of \cite{BhargavaThegeometricsieve}. We embed $\MM$ in $\mathbb{R}^{n - 1}$ by sending $\omega_i$ to $e_i$, the $i$-th standard basis vector. Note that the image under this embedding is $\Z^{n - 1}$. Write $\beta = \sum_{i = 2}^n a_i \omega_i$. Since $|\beta^{(i)}| \leq x^{\frac{1}{n}}$, it follows that $|a_i| \leq C_K x^{\frac{1}{n}}$ for some constant $C_K$ depending only on $K$. Let $B$ be the compact region in $\mathbb{R}^{n - 1}$ given by $B := \{(a_2, \ldots, a_n) : |a_i| \leq C_K\}$. Theorem 3.3 of \cite{BhargavaThegeometricsieve} with our $B$, $Y$ and $r = x^{\frac{1}{n}}$ gives
\begin{align}
\label{eBad1}
|\{\beta \in \MM: |\beta^{(i)}| \leq x^{\frac{1}{n}}, p \mid \gcd(f_1(\beta), f_2(\beta)), p > M\}| \ll \frac{x^{\frac{n - 1}{n}}}{M \log M} + x^{\frac{n - 2}{n}},
\end{align}
where $M$ is any positive real number. Factor
\begin{align*}
f_1(\beta) &:= g_1 q_1, \quad (g_1, q_1) = 1, \quad g_1 \text{ squarefull}, \quad q_1 \text{ squarefree} \\
f_2(\beta) &:= g_2 q_2, \quad (g_2, q_2) = 1, \quad g_2 \text{ squarefull}, \quad q_2 \text{ squarefree}.
\end{align*}
By Lemma \ref{lG} we conclude that for all $A > 0$ and $\epsilon > 0$
\begin{align*}
|\{\beta \in \MM: |\beta^{(i)}| \leq x^{\frac{1}{n}}, g_1 > A\}| \ll_\epsilon x^{\frac{n - 1}{n} + \epsilon} A^{-\frac{1}{2} + \frac{1}{\text{ord}(\sigma)}}.
\end{align*}
With the same argument applied to $\tau$ we obtain
\begin{align}
\label{eBad2}
|\{\beta \in \MM: |\beta^{(i)}| \leq x^{\frac{1}{n}}, g_1 > A \text{ or } g_2 > A\}| \ll_\epsilon x^{\frac{n - 1}{n} + \epsilon} A^{-\frac{1}{2} + \frac{1}{\text{ord}(\sigma)}} + x^{\frac{n - 1}{n} + \epsilon} A^{-\frac{1}{2} + \frac{1}{\text{ord}(\tau)}}.
\end{align}
We discard those $\beta$ that satisfy (\ref{eBad1}) or (\ref{eBad2}). From (\ref{eBad2}) we deduce that the remaining $\beta$ certainly satisfy $\gcd(q_1, q_2) > \frac{Z}{A^2}$. Furthermore, by discarding those $\beta$ satisfying (\ref{eBad1}), we see that $\gcd(q_1, q_2)$ has no prime divisors greater than $M$. This implies that $\gcd(q_1, q_2)$ is divisible by a squarefree number between $\frac{Z}{A^2}$ and $\frac{ZM}{A^2}$. So we must still give an upper bound for
\begin{align}
\label{eTB}
\left|\left\{\beta \in \MM: |\beta^{(i)}| \leq x^{\frac{1}{n}}, r \mid \gcd(q_1, q_2), \frac{Z}{A^2} < r \leq \frac{ZM}{A^2}\right\}\right|.
\end{align}
Let $r$ be a squarefree integer and let $\mathfrak{r}_1, \mathfrak{r}_2$ be two ideals of $K$ with norm $r$. Define
\begin{align*}
E_{\mathfrak{r_1}, \mathfrak{r}_2} := \left|\left\{\beta \in \MM: |\beta^{(i)}| \leq x^{\frac{1}{n}}, \mathfrak{r}_1 \mid \sigma(\beta) - \beta, \mathfrak{r}_2 \mid \tau(\beta) - \beta \right\}\right|.
\end{align*}
We will give an upper bound for $E_{\mathfrak{r_1}, \mathfrak{r}_2}$ following \cite[p.\ 731-733]{FIMR}. Write $\beta = \sum_{i = 2}^n a_i \omega_i$. Then $|\beta^{(i)}| \leq x^{\frac{1}{n}}$ implies $a_i \ll x^{\frac{1}{n}}$ and
\begin{align}
\label{eAL1}
\sum_{i = 2}^n a_i (\sigma(\omega_i) - \omega_i) &\equiv 0 \bmod \mathfrak{r}_1 \\
\label{eAL2}
\sum_{i = 2}^n a_i (\tau(\omega_i) - \omega_i) &\equiv 0 \bmod \mathfrak{r}_2.
\end{align}
We split the coefficients $a_2, \ldots, a_n$ according to their residue classes modulo $r$. Suppose that $p \mid r$ and let $\pp_1$, $\pp_2$ be the unique prime ideals of degree one dividing $\mathfrak{r}_1$ and $\mathfrak{r}_2$ respectively. Then we get
\begin{align}
\label{eLocal1}
\sum_{i = 2}^n a_i (\sigma(\omega_i) - \omega_i) &\equiv 0 \bmod \pp_1 \\
\label{eLocal2}
\sum_{i = 2}^n a_i (\tau' \tau(\omega_i) - \tau'(\omega_i)) &\equiv 0 \bmod \pp_1,
\end{align}
where $\tau'$ satisfies $\tau'^{-1}(\pp_1) = \pp_2$. If we further assume that $\pp_1$ is unramified, we claim that the above two equations are linearly independent over $\mathbb{F}_p$. Indeed, consider the isomorphism
\begin{align*}
\OO_K/p \cong \mathbb{F}_p \times \cdots \times \mathbb{F}_p.
\end{align*}
Note that $\tau' \tau \not \in \{\text{id}, \sigma\}$ or $\tau' \not \in \{\text{id}, \sigma\}$ due to our assumption that $\sigma$ and $\tau$ are distinct elements of $S$. Let us deal with the case $\tau' \tau \not \in \{\text{id}, \sigma\}$, the other case is dealt with similarly. Then there exists $\beta \in \OO_K$ such that $\beta \equiv 1 \bmod \pp_1$, $\beta \equiv 1 \bmod \sigma^{-1}(\pp_1)$, $\beta \equiv 1 \bmod \tau'^{-1}(\pp_1)$ and $\beta$ is divisible by all other conjugates of $\pp_1$. By our assumption on $\tau' \tau$ it follows that $\beta \equiv 0 \bmod \tau^{-1} \tau'^{-1}(\pp_1)$. Hence we obtain
\begin{align*}
\sigma(\beta) - \beta \equiv 0 \bmod \pp_1, \quad \tau' \tau(\beta) - \tau'(\beta) \equiv -1 \bmod \pp_1.
\end{align*}
However, for $\pp_1$ an unramified prime, we know that $\sigma(\beta) - \beta \equiv 0 \bmod \pp_1$ can not happen for all $\beta \in \OO_K$, unless $\sigma$ is the identity. This proves our claim.

If we further split the coefficients $a_2, \ldots, a_n$ according to their residue classes modulo $p$, our claim implies that there are $p^{n - 3}$ solutions $a_2, \ldots, a_n$ modulo $p$ satisfying (\ref{eLocal1}) and (\ref{eLocal2}), provided that $p$ is unramified. For ramified primes we can use the trivial upper bound $p^{n - 1}$. Then we deduce from the Chinese Remainder Theorem that there are $\ll r^{n - 3}$ solutions $a_2, \ldots, a_n$ modulo $r$ satisfying (\ref{eAL1}) and (\ref{eAL2}). This yields
\begin{align*}
E_{\mathfrak{r_1}, \mathfrak{r}_2} \ll r^{n - 3} \left(\frac{x^{\frac{1}{n}}}{r} + 1\right)^{n - 1} \ll x^{\frac{n - 1}{n}} r^{-2} + r^{n - 3}.
\end{align*}
Therefore we have the following upper bound for (\ref{eTB})
\begin{align*}
\sum_{\frac{Z}{A^2} < r \leq \frac{ZM}{A^2}} \sum_{\substack{\mathfrak{r}_1, \mathfrak{r}_2 \\ \Norm{\mathfrak{r}_1} = \Norm{\mathfrak{r}_2} = r}} E_{\mathfrak{r}_1, \mathfrak{r}_2} &\ll \sum_{\frac{Z}{A^2} < r \leq \frac{ZM}{A^2}} \sum_{\substack{\mathfrak{r}_1, \mathfrak{r}_2 \\ \Norm{\mathfrak{r}_1} = \Norm{\mathfrak{r}_2} = r}} x^{\frac{n - 1}{n}} r^{-2} + r^{n - 3} \\
&\ll_{\epsilon} x^{\epsilon} \sum_{\frac{Z}{A^2} < r \leq \frac{ZM}{A^2}} x^{\frac{n - 1}{n}} r^{-2} + r^{n - 3} \\
&\ll_{\epsilon} x^{\epsilon}\left(x^{\frac{n - 1}{n}} \frac{A^2}{Z} + \left(\frac{ZM}{A^2}\right)^{n - 2}\right).
\end{align*}
Note that $\sigma \in S$ implies $\text{ord}(\sigma) \geq 3$. Now choose $A = M = Z^{\frac{1}{3}}$ to complete the proof of Lemma \ref{lGCD}.
\end{proof}

With Lemma \ref{lG} and Lemma \ref{lGCD} in hand we return to estimating the number of $\beta \in \MM$ satisfying $|\beta^{(i)}| \leq x^{\frac{1}{n}}$ and (\ref{eException}). We choose a $\sigma \in S$ and we will consider it as fixed for the remainder of the proof. Note that any integer $n > 0$ can be factored uniquely as
\begin{align*}
n = q' g' r',
\end{align*}
where $q'$ is a squarefree integer coprime to $mF$, $g'$ is a squarefull integer coprime to $mF$ and $r'$ is composed entirely of primes from $mF$. This allows us to define $\text{sqf}(n, mF) := q'$. We start by giving an upper bound for
\begin{align*}
\left|\left\{\beta \in \MM: |\beta^{(i)}| \leq x^{\frac{1}{n}}, \text{sqf}(\Norm{(\beta - \sigma(\beta))}, mF) \leq Z\right\}\right|.
\end{align*}
To do this, we need a slight generalization of the argument on \cite[p.\ 729]{FIMR}. Recall that $K^{\sigma}$ is the subfield of $K$ fixed by $\sigma$ and $\OO_{K^\sigma}$ its ring of integers. Decompose $\OO_K$ as
\begin{align*}
\OO_K = \OO_{K^\sigma} \oplus \MM'.
\end{align*}
Then we have
\begin{multline}
\label{eS1}
\left|\left\{\beta \in \MM: |\beta^{(i)}| \leq x^{\frac{1}{n}}, \text{sqf}(\Norm{(\beta - \sigma(\beta))}, mF) \leq Z\right\}\right| \\
\ll x^{\frac{1}{\text{ord}(\sigma)} - \frac{1}{n}} \left|\left\{\beta \in \MM': |\beta^{(i)}| \leq x^{\frac{1}{n}}, \text{sqf}(\Norm{(\beta - \sigma(\beta))}, mF) \leq Z\right\}\right|.
\end{multline}
The map $\MM' \rightarrow \OO_K$ given by $\beta \mapsto \beta - \sigma(\beta)$ is injective. Set $\gamma := \beta - \sigma(\beta)$. Furthermore, the conjugates of $\gamma$ satisfy $|\gamma^{(i)}| \leq 2x^{\frac{1}{n}}$, which gives
\begin{multline}
\label{eS2}
\left|\left\{\beta \in \MM': |\beta^{(i)}| \leq x^{\frac{1}{n}}, \text{sqf}(\Norm{(\beta - \sigma(\beta))}, mF) \leq Z\right\}\right| \\
\leq \left|\left\{\gamma \in \OO_K: |\gamma^{(i)}| \leq 2x^{\frac{1}{n}}, \text{sqf}(\Norm{(\gamma)}, mF) \leq Z\right\}\right|.
\end{multline}
Instead of counting algebraic integers $\gamma$, we will count the principal ideals they generate, where each given ideal occurs no more than $\ll (\log x)^n$ times. This yields the bound
\begin{multline}
\left|\left\{\gamma \in \OO_K: |\gamma^{(i)}| \leq 2x^{\frac{1}{n}}, \text{sqf}(\Norm{(\gamma)}, mF) \leq Z\right\}\right| \\
\ll (\log x)^n \left|\left\{\mathfrak{b} \subseteq \OO_K: \Norm{(\mathfrak{b})} \leq 2^n x, \text{sqf}(\Norm{(\mathfrak{b})}, mF) \leq Z\right\}\right| \nonumber.
\end{multline}
We conclude that
\begin{align}
\label{eS4}
\left|\left\{\gamma \in \OO_K: |\gamma^{(i)}| \leq 2x^{\frac{1}{n}}, \text{sqf}(\Norm{(\gamma)}, mF)\leq Z\right\}\right| \ll (\log x)^n \sum_{\substack{b \leq 2^nx \\ \text{sqf}(b, mF) \leq Z}} \tau_K(b),
\end{align}
where we remind the reader that $\tau_K(b)$ denotes the number of ideals in $K$ of norm $b$. 

Let us count the number of $b \leq 2^n x$ satisfying $\text{sqf}(b, mF) \leq Z$. We do this by counting the number of possible $g', r' \leq 2^n x$ that can occur in the factorization $b = q'g'r'$. First of all, there are $\ll x^{\frac{1}{2}}$ squarefull integers $g'$ satisfying $g' \leq 2^n x$. To bound the number of $r' \leq 2^n x$, we observe that we may assume $m \leq x$, because otherwise the sum in (\ref{eFirstSum}) is empty. This implies that the number of integers $r' \leq 2^n x$ that are composed entirely of primes from $mF$ is $\ll_\epsilon x^\epsilon$. Obviously there are at most $Z$ squarefree integers $q'$ coprime to $mF$ satisfying $q' \leq Z$. We conclude that the number of $b \leq 2^n x$ satisfying $\text{sqf}(b, mF) \leq Z$ is $\ll_\epsilon Zx^{\frac{1}{2} + \epsilon}$. Combined with the upper bound $\tau_K(b) \ll_\epsilon x^\epsilon$ we obtain
\begin{align}
\label{eS5}
(\log x)^n \sum_{\substack{b \leq 2^nx \\ \text{sqf}(b, mF) \leq Z}} \tau_K(b) \ll_\epsilon Zx^{\frac{1}{2} + \epsilon}.
\end{align}
Stringing together the inequalities (\ref{eS1}), (\ref{eS2}), (\ref{eS4}) and (\ref{eS5}) we conclude that
\begin{align}
\label{eS6}
\left|\left\{\beta \in \MM: |\beta^{(i)}| \leq x^{\frac{1}{n}}, \text{sqf}(\Norm{(\beta - \sigma(\beta))}, mF) \leq Z\right\}\right| \ll_\epsilon Z x^{\frac{1}{2} + \frac{1}{\text{ord}(\sigma)} - \frac{1}{n} + \epsilon}.
\end{align}
Now in order to give an upper bound for the number of $\beta$ satisfying $|\beta^{(i)}| \leq x^{\frac{1}{n}}$ and (\ref{eException}), that is
\begin{align*}
p \mid \prod_{\sigma \in S} \Norm{(\beta - \sigma(\beta))} \Rightarrow p^2 \mid mF\prod_{\sigma \in S} \Norm{(\beta - \sigma(\beta))},
\end{align*}
we start by picking $Z = x^{\frac{1}{3n}}$ and discarding all $\beta$ satisfying (\ref{eS6}) for the $\sigma \in S$ we fixed earlier. For this $\sigma \in S$ and varying $\tau \in S$ with $\tau \neq \sigma$ we apply Lemma \ref{lGCD} to obtain
\begin{align}
\label{eDiscard}
|\{\beta \in \MM: |\beta^{(i)}| \leq x^{\frac{1}{n}}, \gcd(\Norm{(\beta - \sigma(\beta))}, \Norm{(\beta - \tau(\beta))}) > x^{\frac{1}{3n|S|}}\}| \ll_\epsilon x^{\frac{n - 1}{n} -\frac{1}{54n|S|} + \epsilon}.
\end{align}
We further discard all $\beta$ satisfying (\ref{eDiscard}) for some $\tau \in S$ with $\tau \neq \sigma$. Now it is easily checked that the remaining $\beta$ do not satisfy (\ref{eException}). Hence we have completed our task of estimating the number of $\beta$ satisfying $|\beta^{(i)}| \leq x^{\frac{1}{n}}$ and (\ref{eException}).

Let $A_0(x; \rho)$ be the contribution to $A(x; \rho)$ of the terms $\alpha = a + \beta$ for which (\ref{eException}) does not hold and let $A_{\square}(x; \rho)$ be the contribution to $A(x; \rho)$ for which (\ref{eException}) holds. Then we have the obvious identity
\begin{align*}
A(x; \rho) = A_0(x; \rho) + A_{\square}(x; \rho).
\end{align*}
Next we make a further partition
\begin{align*}
A_0(x; \rho) = A_1(x; \rho) + A_2(x; \rho),
\end{align*}
where the components run over $\alpha = a + \beta$, $\beta \in \MM$ with $\beta$ such that
\begin{align*}
g_0 &\leq Y \text{ in } A_1(x; \rho) \\
g_0 &> Y \text{ in } A_2(x; \rho).
\end{align*}
Here $Y$ is at our disposal and we choose it later. From (\ref{eS6}) and (\ref{eDiscard}) we deduce that
\begin{align*}
A_{\square}(x; \rho) \ll_\epsilon x^{1 -\frac{1}{54n|S|} + \epsilon}.
\end{align*}
To estimate $A_1(x; \rho)$ we apply \ref{eFirstBound} and sum over all $\beta \in \MM$ satisfying $|\beta^{(i)}| \leq x^{\frac{1}{n}}$, ignoring all other restrictions on $\beta$, to obtain
\begin{align*}
A_1(x; \rho) \ll_\epsilon Y x^{1 - \frac{\delta}{n} + \epsilon}.
\end{align*}
We still have to bound $A_2(x; \rho)$. Recall that
\begin{align*}
\ccc = \prod_{\sigma \in S} \ccc(\sigma, \beta),
\end{align*}
leading to the factorization $\ccc = \mathfrak{g} \qq$ in (\ref{eFactorc}). We further recall that $g_0$ is the radical of $\Norm{\mathfrak{g}}$. Now factor each term $\ccc(\sigma, \beta)$ as
\begin{align}
\label{eFactorcsb}
\ccc(\sigma, \beta) = \mathfrak{g}(\sigma, \beta) \qq(\sigma, \beta)
\end{align}
just as in (\ref{eFactorc}). The point of (\ref{eFactorcsb}) is that
\begin{align*}
\mathfrak{g} \mid \prod_{\sigma \in S} \mathfrak{g}(\sigma, \beta) \prod_{\substack{\sigma, \tau \in S \\ \sigma \neq \tau}} \gcd(\mathfrak{c}(\sigma, \beta), \mathfrak{c}(\tau, \beta))
\end{align*}
and therefore
\begin{align*}
g_0 \mid \prod_{\sigma \in S} g_0(\sigma, \beta) \prod_{\substack{\sigma, \tau \in S \\ \sigma \neq \tau}} \gcd(\mathfrak{c}(\sigma, \beta), \mathfrak{c}(\tau, \beta)).
\end{align*}
We use Lemma \ref{lG} to discard all $\beta$ satisfying $g_0(\sigma, \beta) > Y^{\frac{1}{|S|^2}}$. Similarly, we use Lemma \ref{lGCD} to discard all $\beta$ satisfying $\gcd(\mathfrak{c}(\sigma, \beta), \mathfrak{c}(\tau, \beta)) > Y^{\frac{1}{|S|^2}}$. Then the remaining $\beta$ satisfy $g_0 \leq Y$. Furthermore, we have removed
\begin{align*}
\ll_\epsilon x^{\frac{n - 1}{n} + \epsilon} Y^{-\frac{1}{18|S|^2}} + x^{\frac{n - 2}{n}} + Y^{\frac{2n - 4}{3|S|^2}} +  x^{\frac{n - 1}{n} + \epsilon} Y^{-\frac{1}{3|S|^2}}
\end{align*}
$\beta$ in total and hence
\begin{align*}
A_2(x; \rho) \ll_\epsilon x^{1 + \epsilon} Y^{-\frac{1}{18|S|^2}} + x^{\frac{n - 1}{n}} + x^{\frac{1}{n}}Y^{\frac{2n - 4}{3|S|^2}} +  x^{1 + \epsilon} Y^{-\frac{1}{3|S|^2}}.
\end{align*}
After picking $Y = x^{\frac{\delta}{2n}}$ we conclude that
\begin{align*}
A(x) \ll_\epsilon x^{1 - \frac{\delta}{54n|S|^2} + \epsilon}.
\end{align*}
We will now sketch how to modify this proof for totally complex $K$. We have to bound
\begin{align}
\label{eFirstSum2}
A(x) = \sum_{\substack{\Norm{\mathfrak{a}} \leq x \\ (\mathfrak{a}, F) = 1, \mathfrak{m} \mid \mathfrak{a}}} r(\aaa)\sum_{t\in T_K}\sum_{v\in V_K/V_K^2}\psi(tv\alpha\bmod F)\prod_{\sigma\in S}\spin(\sigma, tv\alpha).
\end{align}
We use the fundamental domain constructed for totally complex fields form subsection \ref{fundDom} and we pick for each principal $\mathfrak{a}$ its generator in $\mathcal{D}$. Then equation (\ref{eFirstSum2}) becomes
\begin{align*}
A(x) &= \sum_{t\in T_K}\sum_{v\in V_K/V_K^2} \sum_{\substack{\alpha \in \DD, \Norm{\alpha} \leq x \\ \alpha \equiv \rho \bmod F \\ \alpha \equiv 0 \bmod \mathfrak{m}}} \psi(tv\alpha \bmod F) \prod_{\sigma\in S}\spin(\sigma, tv\alpha) \\
&= \sum_{t\in T_K}\sum_{v\in V_K/V_K^2} \sum_{\substack{\alpha \in tv\DD, \Norm{\alpha} \leq x \\ \alpha \equiv \rho \bmod F \\ \alpha \equiv 0 \bmod \mathfrak{m}}} \psi(\alpha \bmod F) \prod_{\sigma\in S}\spin(\sigma, \alpha).
\end{align*}
We deal with each sum of the shape
\begin{align}
\label{eReduceReal}
\sum_{\substack{\alpha \in tv\DD, \Norm{\alpha} \leq x \\ \alpha \equiv \rho \bmod F \\ \alpha \equiv 0 \bmod \mathfrak{m}}} \psi(\alpha \bmod F) \prod_{\sigma\in S}\spin(\sigma, \alpha)
\end{align}
exactly in the same way as for real quadratic fields $K$, where it is important to note that the shifted fundamental domain $tv\DD$ still has the essential properties we need. Combining our estimate for each sum in equation (\ref{eReduceReal}), we obtain the desired upper bound for $A(x)$.

\section{Bilinear sums}
\label{sBilinear}
Let $x, y>0$ and let $\{v_{\aaa}\}_{\aaa}$ and $\{w_{\bb}\}_{\bb}$ be two sequences of complex numbers bounded in modulus by $1$. Define
\begin{equation}\label{BS1}
B(x, y) = \sum_{\Norm(\aaa)\leq x}\sum_{\Norm(\bb)\leq y}v_{\aaa}w_{\bb}s_{\aaa\bb}.
\end{equation}
We wish to prove that for all $\epsilon>0$, we have 
\begin{equation}\label{BS2}
B(x, y) \ll_{\epsilon} \left(x^{-\frac{1}{6n}}+y^{-\frac{1}{6n}}\right)\left(xy\right)^{1+\epsilon},
\end{equation}
where the implied constant is uniform in all choices of sequences $\{v_{\aaa}\}_{\aaa}$ and $\{w_{\bb}\}_{\bb}$ as above. 

We split the sum $B(x, y)$ into $h^2$ sums according to which ideal classes $\aaa$ and $\bb$ belong to. In fact, since $s_{\aaa\bb}$ vanishes whenever $\aaa\bb$ does not belong to the principal class, it suffices to split $B(x, y)$ into $h$ sums
$$
B(x, y) = \sum_{i = 1}^{h}B_i(x, y),\quad B_i(x, y) = \sum_{\substack{\Norm(\aaa)\leq x \\ \aaa\in C_i}}\sum_{\substack{\Norm(\bb)\leq y \\ \bb\in C_i^{-1}}}v_{\aaa}w_{\bb}s_{\aaa\bb}.
$$
We will prove the desired estimate for each of the sums $B_i(x, y)$. So fix an index $i\in\{1, \ldots, h\}$, let $\AAA \in \Cl_a$ be the ideal belonging to the ideal class $C_i^{-1}$, and let $\BBB\in \Cl_b$ be the ideal belonging to the ideal class $C_i$. The conditions on $\aaa$ and $\bb$ above mean that
$$
\aaa\AAA = (\alpha),\quad \alpha\succ 0
$$
and
$$
\bb\BBB = (\beta),\quad \beta\succ 0.
$$
Since $\AAA\in C_i^{-1}$ and $\BBB\in C_i$, there exists an element $\gamma\in\OO_K$ such that
$$
\AAA\BBB = (\gamma),\quad\gamma\succ 0.
$$
We are now in a position to use the factorization formula for $\spin(\aaa\bb)$ appearing in \cite[(3.8), p. 708]{FIMR}, which in turn leads to a factorization formula for $s_{\aaa\bb}$. We note that the formula \cite[(3.8), p. 708]{FIMR} also holds in case $K$ is totally complex, with exactly the same proof. We have
\begin{equation}\label{spinfactor}
\spin(\sigma, \alpha\beta/\gamma) = \spin(\sigma, \gamma)\delta(\sigma; \alpha, \beta)\left(\frac{\alpha\gamma}{\sigma(\aaa\BBB)}\right)\left(\frac{\beta\gamma}{\sigma(\bb\AAA)}\right)\left(\frac{\alpha}{\sigma(\beta)\sigma^{-1}(\beta)}\right),
\end{equation}
where $\delta(\sigma; \alpha, \beta)\in \{\pm 1\}$ is a factor which comes from an application of quadratic reciprocity and which depends only on $\sigma$ and the congruence classes of $\alpha$ and $\beta$ modulo $8$.

If $K$ is real quadratic, then we set
$$
v_{\aaa}' = v_{\aaa}\prod_{\sigma\in S} \left(\frac{\alpha\gamma}{\sigma(\aaa\BBB)}\right), \quad w_{\bb}' = w_{\bb}\prod_{\sigma\in S} \left(\frac{\beta\gamma}{\sigma(\bb\AAA)}\right),
$$
and
$$
\delta(\alpha, \beta) = \psi(\alpha\beta\bmod F)\prod_{\sigma\in S}\delta(\sigma; \alpha, \beta),\quad s(\gamma) = \prod_{\sigma\in S}\spin(\sigma, \gamma),
$$
so that we can rewrite the sum $B_i(x, y)$ as
\begin{equation}\label{BSreal}
B_i(x, y) = s(\gamma) \sum_{\substack{\alpha\in \DD \\ \Norm(\alpha)\leq x\Norm(\AAA) \\ \alpha\equiv 0\bmod \AAA}}\sum_{\substack{\beta\in\DD \\ \Norm(\beta)\leq y\Norm(\BBB) \\ \beta\equiv 0\bmod \BBB}}\delta(\alpha, \beta)v_{(\alpha)/\AAA}'w_{(\beta)/\BBB}'\prod_{\sigma\in S}\left(\frac{\alpha}{\sigma(\beta)\sigma^{-1}(\beta)}\right).
\end{equation}
Now set
$$
v_{\alpha} = \ONE(\alpha\equiv 0\bmod \AAA) \cdot v_{(\alpha)/\AAA}'
$$
and
$$
w_{\beta} = \ONE(\beta\equiv 0\bmod \BBB)\cdot w_{(\beta)/\BBB}', 
$$
where $\ONE(P)$ is the indicator function of a property $P$. Also, for $\alpha, \beta\in\OO_K$ with $\beta$ odd, we define
$$
\phi(\alpha, \beta) = \prod_{\sigma\in S}\left(\frac{\alpha}{\sigma(\beta)\sigma^{-1}(\beta)}\right).
$$
Finally, we further split $B_i(x, y)$ according to the congruence classes of $\alpha$ and $\beta$ modulo $F$, so as to control the factor $\delta(\alpha, \beta)$, which now depends on congruence classes of $\alpha$ and $\beta$ modulo $F$ due to the presence of $\psi(\alpha\beta\bmod F)$. We have 
$$
B_i(x, y) = s(\gamma) \sum_{\alpha_0\in (\OO_K/(F))^{\times}}\sum_{\beta_0\in (\OO_K/(F))^{\times}}\delta(\alpha_0, \beta_0)B_i(x, y; \alpha_0, \beta_0), 
$$
where 
$$
B_i(x, y; \alpha_0, \beta_0) = \sum_{\substack{\alpha\in \DD(x\Norm(\AAA)) \\ \alpha\equiv \alpha_0\bmod F}}\sum_{\substack{\beta\in\DD(y\Norm(\BBB)) \\ \beta\equiv \beta_0\bmod F}}v_{\alpha}w_{\beta}\phi(\alpha, \beta).
$$
To prove the bound \eqref{BS2}, at least in the case that $K$ is totally real, it now suffices to prove, for each $\epsilon>0$, the bound
\begin{equation}\label{BS3}
B_i(x, y; \alpha_0, \beta_0) \ll_{\epsilon} \left(x^{-\frac{1}{6n}}+y^{-\frac{1}{6n}}\right)\left(xy\right)^{1+\epsilon},
\end{equation}
where the implied constant is uniform in all choices of uniformly bounded sequences of complex numbers $\{v_{\alpha}\}_{\alpha}$ and $\{w_{\beta}\}_{\beta}$ indexed by elements of $\OO_K$. Each of the sums $B_i(x, y; \alpha_0, \beta_0)$ is of the same shape as $B(M, N; \omega, \zeta)$ in \cite[(3.2)]{KM2}; in the notation of \cite[Section 3]{KM2}, $\ff = (F)$, $\alpha_w$ corresponds to $v_{\alpha}$, $\beta_z$ corresponds to $w_{\beta}$, and $\gamma(w, z)$ corresponds to $\phi(\alpha, \beta)$ (unfortunately with the arguments $\alpha$ and $\beta$ flipped). Our desired estimate for $B_i(x, y; \alpha_0, \beta_0)$, and hence also $B(x, y)$, would now follow from \cite[Proposition 3.6]{KM2}, provided that we can verify properties (P1)-(P3) for the function $\phi(\alpha, \beta)$. We will verify a slightly weaker modified version of property (P3), namely that 
$$
\sum_{\xi\bmod \Norm(\beta)}\phi(\xi, \beta) = 0
$$
whenever $|\Norm(\beta)|$ is not \textit{squarefull} in $\Z$. Recall that an integer $n$ is called squarefull if whenever a prime $p$ divides $n$, then $p^2$ divides $n$. Since an integer that is not squarefull cannot be a square, this condition is weaker than (P3) in \cite{KM2}. Nonetheless, \cite[Proposition 3.6]{KM2} still holds true even with this weaker assumption. For completeness, we state it here.
\begin{lemma}\label{Prop36}
Let $\DD_1$ and $\DD_2$ be a pair of translates of $\DD$, i.e., $\DD_i = v_i\DD$ for some $v_i\in V_K$. Let $\ff$ be a non-zero ideal in $\OO_K$, and let $S_{\ff}$ be the set of ideals in $\OO_K$ coprime to $\ff$. Suppose $\gamma$ is a map 
$$
\gamma: S_{\ff}\times\OO_F\rightarrow \{-1, 0, 1\}
$$
satisfying the following properties:
\\\\
(P1) for every pair of invertible congruence classes $\omega$ and $\zeta$ modulo $\ff$, there exists $\mu(\omega, \zeta)\in\{\pm 1\}$ such that $\gamma(w, z) = \mu(\omega, \zeta)\gamma(z, w)$ whenever $w \equiv \omega\bmod \ff$ and $z \equiv \zeta\bmod \ff$;
\\\\
(P2) for all $z_1, z_2\in\OO_K$ and all $w\in S_{\ff}$, we have $\gamma(w, z_1z_2) = \gamma(w, z_1)\gamma(w, z_2)$; similarly, for all $w_1, w_2\in S_{\ff}$ and all $z\in\OO_K$, we have $\gamma(w_1w_2, z) = \gamma(w_1, z)\gamma(w_2, z)$; and
\\\\
(P3) for all non-zero $w\in\OO_K$, if $z_1, z_2\in\OO_K$ satisfy $z_1\equiv z_2\bmod \Norm(w)$, then we have $\gamma(w, z_1) = \gamma(w, z_2)$; moreover, if $|\Norm(w)|$ is not squarefull in $\Z$, then $\sum_{\xi\bmod w}\gamma(w, \xi) = 0$.
Let
$$
\mathcal{B}(M, N; \omega, \zeta) = \sum_{\substack{w\in\DD_1,\ \Norm(w)\leq M \\ w\equiv \omega\bmod \ff}}\sum_{\substack{z\in\DD_2,\ \Norm(z)\leq N \\ z\equiv \zeta\bmod \ff}}\alpha_w\beta_z\gamma(w, z),
$$
where $\{\alpha_w\}_w$ and $\{\beta_z\}_z$ are bounded sequences of complex numbers, $\omega$ and $\zeta$ are invertible congruence classes modulo $\ff$, and $M$ and $N$ are positive real numbers. Then
$$ 
\mathcal{B}(M, N; \omega, \zeta) \ll_{\epsilon}\left(M^{-\frac{1}{6n}}+N^{-\frac{1}{6n}}\right)(MN)^{1+\epsilon},
$$
where the implied constant depends on $\epsilon$, on the units $v_1$ and $v_2$, on the supremum norms of $\{\alpha_w\}_w$ and $\{\beta_z\}_z$, and on the congruence classes $\omega$ and $\zeta$ modulo $\ff$.
\end{lemma}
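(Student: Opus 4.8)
The plan is to follow the proof of \cite[Proposition 3.6]{KM2} essentially verbatim and to isolate the single point at which the precise strength of property (P3) is used. Recall the structure of that argument: one applies the Cauchy--Schwarz inequality so as to eliminate the sequence $\{\beta_z\}_z$ (and, in a symmetric second run, $\{\alpha_w\}_w$, which yields the term with $N$ in place of $M$), and then uses the multiplicativity property (P2) in the form $\gamma(w_1, z)\gamma(w_2, z) = \gamma(w_1 w_2, z)$. This reduces the bound for $\mathcal{B}(M, N; \omega, \zeta)$ to an estimate for a sum of the shape
$$
\sum_{\substack{w_1, w_2 \in \DD_1,\ \Norm(w_i) \leq M \\ w_i \equiv \omega \bmod \ff}} \Bigl| \sum_{\substack{z \in \DD_2,\ \Norm(z) \leq N \\ z \equiv \zeta \bmod \ff}} \gamma(w_1 w_2, z) \Bigr|.
$$
By the periodicity clause of (P3) the inner sum is over a function of $z$ that is periodic of period $\Norm(w_1 w_2)$; the diagonal $w_1 = w_2$ is treated trivially and produces the main term, while the off-diagonal is split according to whether the sum of $\gamma(w_1 w_2, \cdot)$ over a complete period vanishes, the ``good'' pairs being handled by completing the incomplete $z$-sum and the ``bad'' pairs by the trivial bound.

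The only way in which the strength of (P3) enters is through the definition of ``good'' and ``bad''. In \cite{KM2} the complete-period sum is guaranteed to vanish whenever $|\Norm(w_1 w_2)|$ is not a perfect square, so the bad pairs are exactly those with $|\Norm(w_1 w_2)|$ a square; with the weaker version of (P3) verified in the present paper, the vanishing is guaranteed only when $|\Norm(w_1 w_2)|$ is not squarefull, so the bad pairs become those with $|\Norm(w_1 w_2)|$ squarefull. For good pairs, completing the incomplete $z$-sum gives exactly the same saving as in \cite{KM2}. For bad pairs one uses the trivial bound, and here the decisive input is that the number of squarefull integers up to $T$ is $O(T^{1/2})$, the same order of magnitude as the number of perfect squares up to $T$; together with the standard bounds $\tau_K(n) \ll_\epsilon n^\epsilon$ on the number of ideals of norm $n$ and $d(n) \ll_\epsilon n^\epsilon$ on the number of divisors of $n$, this means the bad pairs are controlled by precisely the same counting estimates that govern the square-norm pairs in \cite{KM2}, and so contribute the same amount. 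Consequently the final exponent $-\tfrac{1}{6n}$ is unaffected.

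Everything else carries over without modification: properties (P1), (P2) and the periodicity clause of (P3) are used exactly as in \cite{KM2}; the translated fundamental domains $\DD_i = v_i\DD$ and the decomposition into residue classes modulo $\ff$ are handled via the properties of $\DD$ recalled in Section~\ref{fundDom}; and the asserted dependence of the implied constant on $\epsilon$, on $v_1$ and $v_2$, on the sup-norms of $\{\alpha_w\}_w$ and $\{\beta_z\}_z$, and on $\omega$ and $\zeta$ is inherited verbatim. I do not expect a genuine obstacle, since the whole content of the proof is the comparison with \cite[Proposition 3.6]{KM2}; the only point requiring care is to confirm that replacing ``perfect square'' by ``squarefull'' in the definition of the bad set changes nothing beyond the value of an implied constant. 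The one mild subtlety is that for $w_1 w_2$ with $|\Norm(w_1 w_2)|$ squarefull but not a perfect square --- for instance $|\Norm(w_1 w_2)| = 8$ --- one now forgoes a cancellation that \cite{KM2} could have exploited; but such norm-products remain $O(M^{1/2})$ in number, so discarding the corresponding pairs costs nothing asymptotically.
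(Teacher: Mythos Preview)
Your proposal is correct and follows exactly the same approach as the paper: both refer back to the proof of \cite[Proposition 3.6]{KM2} and observe that the sole place (P3) is invoked in full strength is in bounding the ``bad'' pairs, where replacing perfect squares by squarefull integers is harmless because the count of squarefull integers up to $T$ is still $O(T^{1/2})$. Your write-up is in fact more explicit than the paper's two-sentence proof, but the content is identical.
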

\begin{proof}
See the proof of \cite[Proposition 3.6]{KM2}. Indeed, as can be seen on top of page 13 of \cite{KM2}, the only feature of squares that we used is that there are $\ll M$ squares of size at most $M^2$; the same estimate is true for squarefull numbers. 
\end{proof}
We now verify (P1)-(P3), thereby proving the bound \eqref{BS3} and hence also the bound \eqref{BS2}. Property (P1) follows from the law of quadratic reciprocity, since for odd $\alpha$ and $\beta$ we have 
\begin{align*}
\phi(\alpha, \beta) & = \prod_{\sigma\in S}\left(\frac{\alpha}{\sigma(\beta)}\right)\left(\frac{\alpha}{\sigma^{-1}(\beta)}\right) \\
& = \prod_{\sigma\in S}\mu(\sigma; \alpha, \beta)\left(\frac{\sigma(\beta)}{\alpha}\right)\left(\frac{\sigma^{-1}(\beta)}{\alpha}\right) \\
& = \left(\prod_{\sigma\in S}\mu(\sigma; \alpha, \beta)\right)\cdot \prod_{\sigma\in S}\left(\frac{\beta}{\sigma^{-1}(\alpha)}\right)\left(\frac{\beta}{\sigma(\alpha)}\right) \\
& = \left(\prod_{\sigma\in S}\mu(\sigma; \alpha, \beta)\right)\cdot \phi(\beta, \alpha),
\end{align*}
where $\mu(\sigma; \alpha, \beta)$ depends only on $\sigma$ and the congruence classes of $\alpha$ and $\beta$ modulo $8$. Property~(P2) follows immediately from the multiplicativity of each argument of the quadratic residue symbol $(\cdot/ \cdot)$. Finally, for property (P3), since $\sigma^{-1}\not\in S$ whenever $\sigma\in S$, we see that 
$$
\varphi(\beta) = \prod_{\sigma\in S}\sigma(\beta)\sigma^{-1}(\beta)
$$
divides $\Norm(\beta) = \prod_{\sigma\in\Gal(K/\Q)}\sigma(\beta)$; thus, the first part of (P3) indeed holds true. It now suffices to prove that
$$
\sum_{\xi\bmod \Norm(\beta)}\left(\frac{\xi}{\varphi(\beta)}\right)
$$
vanishes if $|\Norm(\beta)|$ is not squarefull.  The sum above is a multiple of the sum
$$
\sum_{\xi\bmod \varphi(\beta)}\left(\frac{\xi}{\varphi(\beta)}\right),
$$
which vanishes if the principal ideal generated by $\varphi(\beta)$ is not the square of an ideal. The proof now proceeds as in \cite[Lemma 3.1]{FIMR}. Supposing $|\Norm(\beta)|$ is not squarefull, we take a rational prime $p$ such that $p \mid \Norm(\beta)$ but $p^2\nmid \Norm(\beta)$. This implies that there is a degree-one prime ideal divisor $\pp$ of $\beta$ such that $(\beta) = \pp\ccc$ with $\ccc$ coprime to $p$, i.e., coprime to all the conjugates of $\pp$. Hence $\varphi(\beta)$ factors as
$$
(\varphi(\beta)) = \prod_{\sigma\in S}\sigma(\pp)\sigma^{-1}(\pp) \prod_{\sigma\in S}\sigma(\ccc)\sigma^{-1}(\ccc),
$$ 
where the evidently non-square $\prod_{\sigma\in S}\sigma(\pp)\sigma^{-1}(\pp)$ is coprime to $\prod_{\sigma\in S}\sigma(\ccc)\sigma^{-1}(\ccc)$, hence proving that $(\varphi(\beta))$ is not a square. This proves that property (P3) holds true, and then Lemma~\ref{Prop36} implies the estimate \eqref{BS3} and hence also \eqref{BS2}, at least in the case that $K$ is totally real.

If $K$ is totally complex, fix $t\in T_K$ and $v\in V_K/V_K^2$. Then replacing $\alpha$ by $tv\alpha$ in \eqref{spinfactor}, we get
$$
\spin(\sigma, tv\alpha\beta/\gamma) = \spin(\sigma, \gamma)\delta(\sigma; tv\alpha, \beta)\left(\frac{tv\alpha\gamma}{\sigma(\aaa\BBB)}\right)\left(\frac{\beta\gamma}{\sigma(\bb\AAA)}\right)\left(\frac{tv}{\sigma(\beta)\sigma^{-1}(\beta)}\right)\left(\frac{\alpha}{\sigma(\beta)\sigma^{-1}(\beta)}\right),
$$
where now $\delta(\sigma; \alpha, \beta; t, v) = \delta(\sigma; tv\alpha, \beta)\left(\frac{tv}{\sigma(\beta)\sigma^{-1}(\beta)}\right)\in \{\pm 1\}$ depends only on $\sigma$, $t$, $v$, and the congruence classes of $\alpha$ and $\beta$ modulo $8$. Then instead of \eqref{BSreal}, we have $B_i(x, y) = $
\begin{equation}\label{BScomplex}
s(\gamma)\sum_{t\in T_K}\sum_{v\in V_K/V_K^2} \sum_{\substack{\alpha\in \DD \\ \Norm(\alpha)\leq x\Norm(\AAA) \\ \alpha\equiv 0\bmod \AAA}}\sum_{\substack{\beta\in\DD \\ \Norm(\beta)\leq y\Norm(\BBB) \\ \beta\equiv 0\bmod \BBB}}\delta(\alpha, \beta; t, v)v(t, v)_{(\alpha)/\AAA}'w_{(\beta)/\BBB}'\prod_{\sigma\in S}\left(\frac{\alpha}{\sigma(\beta)\sigma^{-1}(\beta)}\right),
\end{equation}
where now
$$
v(t, v)_{\aaa}' = v_{\aaa}\prod_{\sigma\in S} \left(\frac{tv\alpha\gamma}{\sigma(\aaa\BBB)}\right), \quad w_{\bb}' = w_{\bb}\prod_{\sigma\in S} \left(\frac{\beta\gamma}{\sigma(\bb\AAA)}\right),
$$
and
$$
\delta(\alpha, \beta; t, v) = \psi(tv\alpha\beta\bmod F)\prod_{\sigma\in S}\delta(\sigma; \alpha, \beta; t, v),\quad s(\gamma) = \prod_{\sigma\in S}\spin(\sigma, \gamma).
$$
The rest of the proof now proceeds identically to the case when $K$ is totally real.

\section{Governing fields}
\label{noGovField}
Let $E = \Q(\zeta_8, \sqrt{1 + i})$ and let $h(-4p)$ be the class number of $\Q(\sqrt{-4p})$. It is well-known that $E$ is a governing field for the $8$-rank of $\Q(\sqrt{-4p})$; in fact $8$ divides $h(-4p)$ if and only if $p$ splits completely in $E$. We assume that $K$ is a hypothetical governing field for the $16$-rank of $\Q(\sqrt{-4p})$ and derive a contradiction. If $K'$ is a normal field extension of $\Q$ containing $K$, then $K'$ is also a governing field. Therefore we can reduce to the case that $K$ contains $E$. In particular, $K$ is totally complex.

We have $\Gal(E/\Q) \cong D_4$ and we fix an element of order $4$ in $\Gal(E/\Q)$ that we call $r$. Let $p$ be a rational prime that splits completely in $E$. Since $E$ is a PID, we can take $\pi$ to be a prime in $\OO_E$ above $p$. It follows from Proposition 6.2 of \cite{KM1}, which is based on earlier work of Bruin and Hemenway \cite{BH}, that there exists an integer $F$ and a function $\psi_0: (\OO_E/F\OO_E)^{\times}\rightarrow \mathbb{C}$ such that for all $p$ with $(p, F) = 1$ we have
\begin{align}
\label{e16c}
16 \mid h(-4p) \Leftrightarrow \psi_0(\pi \bmod F) \left(\frac{r(\pi)}{\pi}\right)_{E, 2} = 1,
\end{align}
where $\psi_0(\alpha \bmod F) = \psi_0(\alpha u^2 \bmod F)$ for all $\alpha\in\OO_K$ coprime to $F$ and all $u\in\OO_K^{\times}$. We take $S$ equal to the inverse image of our fixed automorphism $r$ under the natural surjective map $\Gal(K/\Q) \rightarrow \Gal(E/\Q)$. Then it is easily seen that $\sigma \in S$ implies $\sigma^{-1} \not \in S$. If $\pp$ is a principal prime of $K$ with generator $w$ of norm $p$, we have
\begin{align*}
\prod_{\sigma \in S} \spin(\sigma, w) &= \prod_{\sigma \in S} \left(\frac{w}{\sigma(w)}\right)_{K, 2} = \left(\frac{w}{r(\Norm_{K/E}(w))}\right)_{K, 2} = \psi_1(w \bmod 8) \left(\frac{r(\Norm_{K/E}(w))}{w} \right)_{K, 2} \\
&= \psi_1(w \bmod 8) \left(\frac{r(\Norm_{K/E}(w))}{\Norm_{K/E}(w)}\right)_{E, 2}.
\end{align*}
We are now going to apply Theorem \ref{tJoint} to the number field $K$, the function
\[
\psi(w \bmod F) := \psi_1(w \bmod 8) \psi_0\left(\Norm_{K/E}(w) \bmod F\right).
\]
and $S$ as defined above. Then for a principal prime $\pp$ of $K$ with generator $w$ and norm $p$
\begin{align}
\label{e16above}
s_\pp = \sum_{t \in T_K}\sum_{v \in V_K/V_K^2} \psi\left(tvw \bmod F\right) \prod_{\sigma \in S}\spin(\sigma, tvw) = 2|T_K| |V_K/V_K^2| \left(\mathbf{1}_{16 \mid h(-p)} - \frac{1}{2}\right),
\end{align}
since the equivalence in (\ref{e16c}) does not depend on the choice of $\pi$. Theorem \ref{tJoint} shows oscillation of the sum
\[
\sum_{\substack{\Norm(\pp)\leq X \\ \pp\text{ principal}}} s_\pp.
\]
The dominant contribution of this sum comes from prime ideals of degree $1$ and for these primes equation (\ref{e16above}) is valid. But if $K$ were to be a governing field, $s_\pp$ has to be constant on unramified prime ideals of degree $1$, which is the desired contradiction.

\end{document}